
\documentclass[11pt,final]{amsart}
\textwidth=6.1in
\hoffset=-0.5in
\usepackage[pctex32]{graphics}
\usepackage{amsfonts}
\usepackage{amssymb,amscd,latexsym}
\usepackage{amsmath}
\usepackage{epsfig}
\usepackage{color}
\usepackage[normalem]{ulem}
\usepackage{color}

\usepackage[dvipsnames]{xcolor}
\usepackage{fancyvrb}   
 
\definecolor{airforceblue}{rgb}{0.36, 0.54, 0.66}
\definecolor{bleudefrance}{rgb}{0.19, 0.55, 0.91}
\definecolor{darkorchid}{rgb}{0.6, 0.2, 0.8}
\definecolor{darkorange}{rgb}{1.0, 0.55, 0.0}
\definecolor{darkspringgreen}{rgb}{0.09, 0.45, 0.27}

\usepackage[normalem]{ulem}
\usepackage[utf8]{inputenc}
\usepackage[english]{babel} 
\usepackage{enumerate}
\usepackage{hyperref}  
                 \hypersetup{ pdfborder={0 0 0}, 
                              colorlinks=true, 
                              linkcolor=blue, 
                              urlcolor=blue,
                              citecolor=blue,
                              linktoc=page,
                              pdfauthor={Michele Bolognesi, Francesco Russo, Giovanni Staglian{\`o}}, 
                              pdftitle={Some loci of rational cubic fourfolds}
                            }                         
\usepackage{verbatim}

\newtheorem*{thmn0}{Theorem}
\newtheorem*{thm27}{Theorem 2.7}

\newtheorem*{thm34}{Theorem 3.4}
\newtheorem*{cor35}{Corollary 3.5}
\newtheorem*{thm37}{Theorem 3.7}

\newtheorem{thm}{Theorem}[section]
\newtheorem{cor}[thm]{Corollary}
\newtheorem{prop}[thm]{Proposition}
\newtheorem{Proposition}[thm]{Proposition}
\newtheorem{defin}[thm]{Definition}

\newtheorem{lema}[thm]{Lemma}
\newtheorem{rmk}[thm]{Remark}

\newtheorem{Remark}[thm]{Remark}
\theoremstyle{definition}
\newtheorem{ex}[thm]{Example}

\newcommand{\PP}{{\mathbb{P}}}

\newcommand{\map}{\dasharrow}
\newcommand{\codim}{\operatorname{codim}}
\newcommand{\red}{\operatorname{red}}

\newcommand{\Tor}{\operatorname{Tor}}
\def\p{\mathbb P}

\def\P{\mathbb{P}}
\def\I{\mathcal I}
\renewcommand{\O}{\mathcal O}
\def\rk{\operatorname{rk}}

\renewcommand{\H}{\operatorname{H}}
\def\Sing{\operatorname{Sing}}

\newcommand{\mult}{\operatorname{mult}}

\newcommand{\Bl}{\operatorname{Bl}}

\newcommand{\length}{\operatorname{length}}

\renewcommand{\Vert}{\operatorname{Vert}}
\begin{document}

\title{Some loci of rational cubic fourfolds}

\author[M. Bolognesi]{Michele Bolognesi}
\address{Institut Montpellierain Alexander Grothendieck \\ %
Universit\'e de Montpellier \\ %
Case Courrier 051 - Place Eug\`ene Bataillon \\ %
34095 Montpellier Cedex 5 \\ %
France}
\email{michele.bolognesi@umontpellier.fr}

\author[F. Russo]{Francesco Russo*}
\address{Dipartimento di Matematica e Informatica, Universit\` a degli Studi di Catania, Viale A. Doria 5, 95125 Catania, Italy}
\email{frusso@dmi.unict.it}

\author[G. Staglian\` o]{Giovanni Staglian\` o} 
\address{Dipartimento di Ingegneria Industriale e Scienze Matematiche, Universit\`a Politecnica delle Marche, Via Brecce Bianche, 60131 Ancona, Italy}
\email{stagliano@dipmat.univpm.it}
\thanks{*Partially  supported  by the PRIN ``Geometria delle variet\`{a} algebriche" and by the Research Network Program GDRE-GRIFGA; the author is a member of the G.N.S.A.G.A.}

\begin{abstract}
	In this paper we investigate the divisor $\mathcal C_{14}$ inside the moduli space of smooth cubic hypersurfaces in $\PP^5$, whose general element is a smooth cubic containing a smooth quartic rational normal scroll. By showing that all degenerations of quartic scrolls in $\PP^5$ contained in a smooth cubic hypersurface are surfaces with one apparent double point, we prove that every cubic hypersurface contained in $\mathcal C_{14}$ is rational. Combining our proof with the Hodge theoretic definition of $\mathcal C_{14}$, we deduce that on a smooth cubic fourfold every class $T\in \H^{2,2}(X,\mathbb{Z})$  with $T^2=10$ and $T\cdot h^2=4$ is represented by a (possibly reducible) surface of degree four which has one apparent double point.  As an application of our results and of the construction of some explicit examples,	we also prove that the Pfaffian locus is not open in $\mathcal C_{14}$. 
	
\end{abstract}

\maketitle

\section*{Introduction}

Cubic hypersurfaces in $\PP^5$ are among the most studied objects in algebraic geometry. This is surely due to the wealth of geometry that they carry along and, possibly, to the fact that they are somehow a very simply defined class of geometric objects, whose rationality is not yet well-understood. The study of the moduli space $\mathcal C$ of smooth cubic fourfolds, particularly through GIT and the period map, has known some very striking advances in recent years, see for example  \cite{voisin,laza,Loij}, and this analysis  has been developed  in parallel to the study of rationality. 
In particular, Hassett described a countable infinity of divisors $\mathcal C_d$ that parametrize \it special cubic 4-folds, \rm that is \rm cubic hypersurfaces  containing  a surface not homologous to a complete intersection, see \cite{Has00}. One expects that rational cubic fourfolds should be strictly contained in the union of these special divisors $\mathcal C_d$ (more precise conjectures have been formulated by Kuznetsov and Hassett, see \cite[Section 3]{Hasurvey} for the state of the art on the subject).

An interesting  and well studied locus of rational cubics is given by Pfaffian cubics, \it i.e. \rm cubic hypersurfaces admitting an equation  given by the Pfaffian of a $6 \times 6$ anti-symmetric matrix of linear forms. These cubics form a dense set in the Fano-Hassett divisor $\mathcal C_{14}$, which is not open in $\mathcal C_{14}$, as we shall show in Theorem \ref{notpfaff} and Remark \ref{morph}.  
As a  {\it special surface} for  $\mathcal C_{14}$ one can take  either  a smooth quartic rational normal scroll or a smooth quintic del Pezzo surface but also, for instance, the isomorphic projection of a smooth surface of degree 8 and sectional genus 3 in $\p^6$, see \cite[Theorem 2]{RS}. The Pfaffian cubics form a subset in $\mathcal C_{14}$, which consists exactly of  cubic fourfolds  containing a smooth quintic del Pezzo surface, see \cite[Proposition 9.2]{bove-det}.

Other examples of rational cubic fourfolds are given by a countable infinity of irreducible divisors $W_i$ in $\mathcal C_8$, the divisor of cubic 4-folds containing a plane. The families $W_i$'s are thus of codimension two in $\mathcal C$ and consist of cubic 4-folds containing a plane $P$ such that the natural quadric fibration obtained by projection from $P$ has a rational section, yielding directly the rationality of these cubic hypersurfaces.  As first remarked in \cite{ABBVA} and as we shall also show in the last section, there exist rational cubic hypersurfaces in $\mathcal C_8$ such that the associated quadric fibration has no section, see Remark \ref{ratsec} for details. Another countable union of divisors in $\mathcal C_{18}$ parametrizing rational cubic fourfolds has been recently constructed in \cite{AHTV-A}, by showing that these cubics are birational to fibrations of del Pezzo sextic surfaces that admit a section. More recently, the second and third named author proved that a general cubic fourfold in $\mathcal C_{26}$ and in $\mathcal C_{38}$ is rational. This confirms, for these two divisors, the expectations of Kuznetsov conjecture on the rationality of cubic fourfolds (see \cite{RS} for more details). Up to now, the general members of these countable loci in $\mathcal C_8$ and $\mathcal C_{18}$ together with the general elements of $\mathcal C_{14}$, $\mathcal C_{26}$ and $\mathcal C_{38}$ are the only known examples of rational cubic fourfolds.

So far, a direct proof of the rationality of all the elements of $\mathcal C_{14}$ was not known. \footnote{A long 
time after this paper was posted on arXiv, Kontsevich and Tschinkel \cite{KT17} showed that for a smooth proper family over a smooth connected curve the rationality of the generic fiber implies the rationality of all the fibers,
that is rationality specializes.} 
Moreover, $\mathcal C_8\cap\mathcal C_{14}\neq\emptyset$ and  $\mathcal C_{14}$ intersects also many other
divisors $\mathcal C_d$ for which the general member is not known to be rational. 
The geometric description of the divisor $\mathcal C_{14}$ allows us to extend the known rationality of a general element of $\mathcal C_{14}$ to each element of the family  using generalized 
one apparent double point surfaces, dubbed OADP surfaces in the following (see Section \ref{Prel} for precise definitions). In fact, the mere existence of an OADP surface inside a cubic 4-fold  implies its rationality.
One of the  main results of this paper  is the following.

\begin{thmn0}\label{intro-rat}
	All the cubic 4-folds contained in the irreducible divisor $\mathcal C_{14}$ are rational.
	\end{thmn0}

The way to the proof of the previous Theorem requires a complete understanding  of the details of  some results proved by Fano in \cite{Fano}, of which we give new
and modern formulation (and proofs).
Some of these results have been frequently cited and used in the literature but a modern detailed account  does not seem to have appeared elsewhere. More precisely, they rely on the study of the rational map defined by quadrics vanishing on a smooth quartic rational normal scroll, on its restriction to a cubic hypersurface
containing the scroll and on the family of quartic rational normal scrolls contained in a cubic fourfold in $\mathcal C_{14}$. In particular, we are able to prove the next result.
\begin{thm27}
	Let $X$ be a cubic fourfold contained in $\mathcal C_{14}\setminus\mathcal C_8$ and let $\mathcal{T}$ be the Hilbert scheme  of quartic rational normal scrolls contained in $X$. Then $\dim(\mathcal{T})=2$ and each
	point of $\mathcal T$ correspond to a smooth quartic rational normal scroll contained in $X$.
	\end{thm27}

Then we consider the intersection of $\mathcal C_8$ with $\mathcal C_{14}$. The general associated cubic fourfolds contain a plane and (the class of) a smooth quartic scroll. By \it the class \rm of a smooth quartic scroll, we mean a 2-cycle in $A(X)$ with the same intersection theoretical properties as a smooth quartic scroll.
The components of $\mathcal C_8\cap\mathcal C_{14}$ had already been described in \cite{ABBVA} but our approach  here is more direct and different. In fact, without relying on the arithmetic of the intersection lattices involved, we exhibit explicit examples of cubics contained in the components of the intersection $\mathcal C_8\cap\mathcal C_{14}$ and study the degenerations of such cubics.  In particular, we check which components intersect the Pfaffian locus $\mathcal Pf$ of the moduli space, and we prove that \it the Pfaffian locus is not open in $\mathcal C_{14}$. \rm
These results are  summarized in the next theorem.

\begin{thm34}
	 There are five irreducible components of $\mathcal C_8\cap \mathcal C_{14}$. 
The components are indexed by the value $P\cdot T\in\{-1, 0, 1, 2, 3\}$, where $P\subset X$ is a plane and $T$ the class of a small OADP surface such that $T^2=10$ and $T\cdot h^2=4$. 
\end{thm34}

Voisin proved in \cite{voisin} that, for an arbitrary cubic fourfold $X\subset\p^5$, every class $P\in \H^{2,2}(X,\mathbb Z)=\H^4(X,\mathbb Z)\cap \H^2(\Omega^2_X)$ with $P\cdot h^2=1$ and $P^2=3$ is represented by a  plane in $X$. Theorem \ref{ratC14} and Theorem \ref{compC148} yield the following analogous statement.

\begin{cor35} Let $X\in\mathcal C_{14}$ and let $T\in \H^{2,2}(X,\mathbb Z)$ with $T\cdot h^2=4$ and $T^2=10$. Then $T$ is represented by a small OADP surface contained in $X$.
\end{cor35}

Generically, the cycle $T$ above is a smooth quartic rational normal scroll with the exception of the component where $P\cdot T=-1$, where the general element does not contain any smooth quartic rational normal scroll nor any smooth quintic del Pezzo surface and $T$ is the union of two quadric surfaces intersecting along a line. This yields the following result.

\begin{thm37}
	The set $\mathcal{P}f\subset \mathcal C_{14}$ is not open in $\mathcal C_{14}$. Analogously, the set of smooth cubic fourfolds containing a smooth quartic rational normal scroll is not open in $\mathcal C_{14}$.
\end{thm37}

\subsection{Description of the contents}

In Section 1 we develop some technical results that will be used in the rest of the paper. In particular, we explain the relation between certain varieties defined by quadratic equations  and the OADP condition. 

In Section 2 we reconstruct, state in modern terms and prove  Fano's deformation argument and then  we show the rationality of every element of $\mathcal C_{14}\setminus\mathcal C_8$. In Section 3 we describe the components of $\mathcal C_8\cap\mathcal C_{14}$,  prove the rationality of every cubic in this set, discuss their geometry and analyze their intersections with the Pfaffian locus. Finally, we give a quick proof of the non-openness of the Pfaffian locus.

The paper ends with a section containing some examples of cubic hypersurfaces in $\mathcal C_8 \cap \mathcal C_{14}$ crucial for the proof of the non-openness of the Pfaffian locus.
\medskip

{\bf Acknowledgements}. 
We have received support from the Research Network Program GDRE-GRIFGA, by PRIN {\it Geometria delle Variet\` a Algebriche} and by the Labex LEBESGUE. We would like to thank: A. Auel (a comment of whom inspired Thm. \ref{notpfaff}), M. Bernardara, C. Ciliberto, B. Hassett, A. Kuznetsov, D.  Markushevich and  H. Nuer for stimulating conversations and exchange of ideas. We heartfully thank the referees for their careful reading and for many suggestions, which lead to a significant improvement of the exposition.

\section{Preliminary results}\label{Prel}

\subsection{Small varieties and varieties with one apparent double point.}

We begin by recalling a characterization of 2--regular reduced schemes   from \cite{EGHP}. 

\begin{defin}{\rm 
A non-degenerate scheme $X\subset \p^N$  is 2--{\it regular} in the sense of Castelnuovo--Mumford
if its homogeneous
ideal $I(X)$ is generated by quadratic equations and if $I(X)$ has a linear resolution. In particular
the first syzygies of $I(X)$ are generated by the linear ones.}
\end{defin}
Examples of $2$-regular irreducible varieties are non-degenerate irreducible varieties $X\subset \p^N$
of minimal degree $\deg(X)=\codim(X)+1$, which are also characterized by the previous
algebraic property.

\begin{defin}{\rm 
A scheme $X\subset \p^N$ is {\it small} if for every linear space $P\subset\p^N$ such that
$Y=P\cap X$ has finite length $\deg(Y)$ we have $\dim(\langle Y\rangle)=\deg(Y)-1$, that is the 
$\deg(Y)$ points are linearly independent in $\p^N$.}
\end{defin}

\begin{defin}{\rm 
Let $X=X_1\cup X_2\cup..\cup X_r\subset\p^N$ be a reduced scheme with $X_i$ irreducible and with $X_i\not\subseteq X_j$ for every $i$ and $j$. The sequence of schemes $X_1, X_2, \dots, X_r\subset\p^N$ is {a linearly joined sequence} if
$$(X_1\cup X_2\cup...\cup X_i)\cap X_{i+1}=\langle X_1\cup X_2\cup...\cup X_i\rangle \cap \langle X_{i+1}\rangle$$ 
for every $i=1,\ldots, r-1$, where $\langle Y\rangle$ denotes the linear span of a
scheme $Y\subset\p^N$.}
\end{defin}

One should remark that the previous property depends on the order of the irreducible components, see \cite[Example 0.5]{EGHP}.
\begin{thm}\label{smallschemes}\cite[Thm. 0.4]{EGHP} Let  $X\subset\p^N$ be a reduced scheme.
Then the following conditions are equivalent:
\begin{enumerate}
\item[\rm{(i)}] $X$ is small;
\medskip
\item[{\rm(ii)}] $X$ is 2-regular;
\medskip
\item[\rm{(iii)}] $X$ is a linearly joined sequence of  irreducible varieties of minimal degree.
\end{enumerate}
\end{thm}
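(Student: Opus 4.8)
The plan is to establish the cycle of implications $\mathrm{(iii)}\Rightarrow\mathrm{(ii)}\Rightarrow\mathrm{(i)}\Rightarrow\mathrm{(iii)}$: condition $\mathrm{(iii)}$ is the most explicit and is the natural starting point, $\mathrm{(ii)}$ is the homologically flexible one, and $\mathrm{(i)}$ is the one that is easy to test on linear sections. Since $X$ is reduced by hypothesis, there are no embedded components to worry about.

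For $\mathrm{(iii)}\Rightarrow\mathrm{(ii)}$ I would argue by induction on the number $r$ of components. The base case is the classical fact that an irreducible non-degenerate variety of minimal degree (a quadric, a rational normal scroll, the Veronese surface, or a cone over one of these) has a $2$-linear free resolution, hence is $2$-regular, and that a linear space is trivially $2$-regular. For the step, write $X=X'\cup X_r$ with $X'=X_1\cup\cdots\cup X_{r-1}$ a linearly joined sequence, hence $2$-regular by induction, and $Z:=X'\cap X_r=\langle X'\rangle\cap\langle X_r\rangle$, which is a linear subspace. One first checks, again inductively, that such a union is reduced with $I(X)=I(X')\cap I(X_r)$ and moreover $I(X')+I(X_r)=I(Z)$ — this is exactly where the linearly joined hypothesis, read scheme-theoretically, is used. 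These identities give the Mayer--Vietoris sequence
$$0\to \O_X\to \O_{X'}\oplus\O_{X_r}\to \O_Z\to 0 .$$
Twisting by $\O(t)$, $t\ge 1$, and chasing cohomology in the resulting long exact sequences, the $2$-regularity of the three sheaves $\O_{X'}$, $\O_{X_r}$ and $\O_Z$ (the last a linear space) forces $H^i(\mathcal I_X(t-i))=0$ for all $i\ge 1$ and $t\ge 2$; as $X$ is non-degenerate and is not a linear space, it is precisely $2$-regular.

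For $\mathrm{(ii)}\Rightarrow\mathrm{(i)}$ I would separate the statement into a cohomological observation and a restriction property. The cohomological half: if $Y\subset\p^N$ is a finite scheme that is $2$-regular, then $H^1(\mathcal I_Y(1))=0$, so the restriction map $H^0(\O_{\p^N}(1))\to H^0(\O_Y)$ is surjective and hence $\dim\langle Y\rangle=h^0(\O_Y)-1=\deg(Y)-1$, which is exactly the equality demanded by smallness. The restriction property is that $2$-regularity is inherited by \emph{every} linear section $X\cap P$: this is the technical core, and it is the content of the restriction theorem for linear syzygies of \cite{EGHP} — morally, the linear strand of the resolution of a $2$-regular ideal ``fills up'' in a way that survives passage to a linear subspace. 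Granting it, for any $P$ with $X\cap P$ finite we conclude that $Y=X\cap P$ is $2$-regular, hence spans a $\p^{\deg Y-1}$, so $X$ is small.

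Finally, $\mathrm{(i)}\Rightarrow\mathrm{(iii)}$ is where the real work sits. Smallness is obviously inherited by linear sections, and by intersecting a fixed component $X_i$ with a general linear subspace of complementary dimension inside $\langle X_i\rangle$ one finds $\deg X_i$ points spanning a space of dimension $\deg X_i-1$, whence $\deg X_i\le\codim_{\langle X_i\rangle}X_i+1$; the reverse inequality holds for any non-degenerate variety, so each $X_i$ has minimal degree. What remains — and what I expect to be the main obstacle — is to produce an ordering of the components realizing $X$ as a linearly joined sequence: one must find a component that can be placed last, i.e. whose span meets the union of the others exactly along the intersection of the two linear spans \emph{as schemes}, and show that deleting it leaves a set that is still small, so the induction closes. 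Showing that such a component exists, and controlling the scheme structure of the intersections rather than only their supports, seems to require the full analysis of the linear syzygy strand and its restrictions developed in \cite{EGHP}; at that point I would simply invoke their theorem rather than reprove it, which is what we do here.
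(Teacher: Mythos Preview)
The paper does not prove this theorem at all: it is merely quoted as ``the main result of \cite{EGHP}'' and stated without proof. There is therefore no argument in the paper to compare your proposal against.

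Your sketch is a reasonable outline of the strategy one finds in \cite{EGHP}: the Mayer--Vietoris induction for $\mathrm{(iii)}\Rightarrow\mathrm{(ii)}$ and the $H^1(\mathcal I_Y(1))=0$ observation for $\mathrm{(ii)}\Rightarrow\mathrm{(i)}$ are standard and essentially correct, and you rightly identify $\mathrm{(i)}\Rightarrow\mathrm{(iii)}$ (finding the ordering and controlling the scheme-theoretic intersections) as the substantive step, which you then defer to \cite{EGHP}. Since you end by invoking the very theorem you set out to prove, your proposal is, at bottom, the same as the paper's: cite \cite{EGHP}. The only difference is that you have unpacked the two easier implications, which the paper does not bother to do.
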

\medskip

Let us recall that, given homogeneous forms $f_i$ of degree $d_i\geq 1$, $i=0,\ldots M$, a vector of homogenous forms $(g_0,\ldots,g_M)$
is {\it a syzygy} if $\sum_{i=0}^Mf_ig_i=0$. If $d_1=\cdots=d_M=d$ and if $\deg(g_i)=h$ for every $i=0,\ldots, M$, then  we say that $(g_0,\ldots, g_M)$
is a syzygy of degree $h$ and for $h=1$ we shall say that the syzygy is {\it linear}. For $i<j$ the syzygies $(0,\ldots,0,f_j,0,\ldots,0, -f_i,0,\ldots0)$,
corresponding to the trivial identity $f_if_j+f_j(-f_i)=0$ are called {\it Koszul syzygies}. We say that the Koszul syzygies are generated by 
the linear ones if they belong to the submodule generated by the linear syzygies.

Next we state a result of Vermeire, which applies to 2-regular schemes, but also for example to quintic del Pezzo surfaces in $\p^5$.

\begin{prop}\label{verme} {\rm (\cite[Proposition 2.8]{Vermeire})} Let $f_0,\ldots, f_M$ be homogeneous forms in $N+1$ variables of degree $d\geq 2$
such that the Koszul  syzygies are generated by the linear ones. Then the closure of each fiber of the rational map

$$\phi=(f_0:\ldots:f_M):\p^N\map\p^M$$
is a linear space $\p^s$, which for $s>0$  intersects scheme theoretically the base locus scheme of $\phi$ along a hypersurface of degree $d$.
\end{prop}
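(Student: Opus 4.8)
The plan is to show that the closure of a general fiber of $\phi$ is a linear space, and then extend to all fibers by semicontinuity (or simply by working scheme-theoretically with the ideal of the fiber). First I would fix a general point $q=(a_0:\ldots:a_M)\in\overline{\Im(\phi)}$ and consider the subscheme $F_q=\overline{\phi^{-1}(q)}\subset\p^N$, which is cut out (away from the base locus $\Bs(\phi)=V(f_0,\ldots,f_M)$) by the equations $a_i f_j - a_j f_i=0$ for all $i,j$; equivalently by the $2\times 2$ minors of the $2\times(M+1)$ matrix with rows $(a_0,\ldots,a_M)$ and $(f_0,\ldots,f_M)$. The key point is that the homogeneous ideal of $F_q$ — or at least its saturation — is generated by \emph{linear} forms, and this is exactly where the hypothesis on the Koszul syzygies enters.

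The central step is the syzygy argument. A Koszul syzygy of $(f_0,\ldots,f_M)$ is the relation $f_j\cdot f_i - f_i\cdot f_j = 0$; by hypothesis every such relation lies in the submodule generated by the linear syzygies, i.e.\ one can write $f_i e_j - f_j e_i = \sum_k \ell_{ijk}(\mathbf{f})$ where the $\ell_{ijk}$ are relations with linear coefficients among the $f_k$'s. Substituting $f_k\mapsto a_k$ (evaluation at the coefficients of $q$) converts each degree-$d$ generator $a_i f_j - a_j f_i$ of $I(F_q)$ into a \emph{linear} combination, with the \emph{same} linear coefficient forms $\ell_{ijk}$, of the quantities $a_k f_l - a_l f_k$ — no, more carefully: one uses the linear syzygies to rewrite, modulo the linear syzygies, the differences $a_i f_j - a_j f_i$ in terms of a fixed small set of linear forms. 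The cleanest way to organize this is: let $L_q\subset\p^N$ be the linear subspace defined by the span of all linear forms $g$ such that $g$ appears as a coefficient in some linear syzygy $\sum g_k f_k = 0$ evaluated at $q$, together with suitable combinations; one shows $F_q = L_q$ set-theoretically (and scheme-theoretically after saturation) by checking containment both ways — $L_q\subseteq F_q$ is immediate since each such linear form vanishes on $F_q$ once we know the generators $a_if_j-a_jf_i$ reduce to them, and $F_q\subseteq L_q$ follows because the reduction expresses $I(F_q)$ inside the ideal generated by these linear forms.

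Concretely I would argue as follows. Pick any two distinct points $x,y\in F_q\setminus\Bs(\phi)$. It suffices to show the line $\overline{xy}$ lies in $F_q$, since a non-degenerate (in its span) variety all of whose secant lines it contains is a linear space. Evaluating a linear syzygy $\sum_k g_k(\mathbf t) f_k(\mathbf t)=0$ at a point $t$ on the line $\overline{xy}$: the vector $(f_0(t):\ldots:f_M(t))$ is constrained by the linearly-varying conditions $\sum_k g_k(t) f_k(t)=0$, and because both $x$ and $y$ map to $q$, a dimension count on the linear system of quadrics $a_if_j - a_jf_i$ restricted to $\overline{xy}$ — combined with the fact that the linear syzygies force these quadrics to be proportional along the line — shows they all vanish identically on $\overline{xy}$. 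The hard part will be making this last reduction fully rigorous: one must verify that the linear syzygies really do cut down the vanishing ideal of the restricted system to force identical vanishing rather than vanishing at finitely many points, and this requires being careful about the interaction with $\Bs(\phi)$ and about passing between the ideal and its saturation. Once the general fiber is shown to be linear, the statement for every fiber follows either from the fact that flat limits of linear spaces in the relevant Hilbert scheme component are linear spaces, or by running the same ideal-theoretic reduction uniformly, which in fact does not use genericity of $q$ at all.

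I expect the main obstacle to be precisely this bookkeeping with the linear syzygies — translating the algebraic hypothesis "Koszul syzygies are generated by linear ones" into the geometric conclusion about fibers — rather than any deep input; morally it is a direct computation, and indeed this is Vermeire's Proposition 2.8, so I would ultimately cite \cite{Vermeire} for the details while recording the above as the conceptual skeleton.
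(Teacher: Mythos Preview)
Your starting point is right --- the fiber over $q=(a_0:\ldots:a_M)$ is cut out (off the base locus) by the forms $a_if_j-a_jf_i$, and the goal is to show this ideal is generated by linear forms. But the substitution you attempt is backwards and this is where the argument collapses. You write ``substituting $f_k\mapsto a_k$'' and then, correctly sensing this is wrong, abandon it for a secant-line argument that never becomes precise. The two-points-on-a-line approach and the subsequent appeal to flat limits are both unnecessary detours: the statement holds for \emph{every} fiber by a direct, uniform computation, and you should not need semicontinuity at all.

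The missing idea, which is exactly what the paper does, is to work with the graph of $\phi$ in $\p^N\times\p^M$. Its equations (off the exceptional locus) are $y_if_j-y_jf_i=0$. These are literally the Koszul syzygies $f_ie_j-f_je_i$ of the module $\bigoplus_k Re_k$ once you identify the free module generators $e_k$ with the target coordinates $y_k$ --- \emph{that} is the correct substitution, not replacing $f_k$ by a scalar. The hypothesis says each $f_ie_j-f_je_i$ lies in the submodule generated by linear syzygies $\sum_k a_{k,l}(\mathbf x)e_k$; hence each $y_if_j-y_jf_i$ lies in the ideal generated by the bilinear forms $\sum_k a_{k,l}(\mathbf x)y_k$. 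Conversely these bilinear forms vanish on the graph since there $(y_0:\ldots:y_M)=(f_0:\ldots:f_M)$ and $\sum_k a_{k,l}f_k=0$. So the graph is cut out by equations linear in $\mathbf x$ and linear in $\mathbf y$; specializing $\mathbf y=q$ gives linear equations in $\mathbf x$ defining the closure of the fiber. One line, no genericity, no limits.
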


Let us introduce the following important definition.
\begin{defin}{\rm 
Let $X$ be an equidimensional reduced scheme in $\p^{2n+1}$ of dimension $n$. The scheme $X$ is called {\it a (generalized) variety with one apparent double point}, briefly {\it  OADP variety}, if through a general point of $\p^{2n+1}$
there passes a unique secant line to $X$, that is a unique line cutting $X$ scheme-theoretically in a reduced length two scheme.}
\end{defin}

 The name $OADP$ variety is usually reserved for the irreducible reduced schemes satisfying the previous condition and it comes from the fact that the projection of $X$ from a general point into $\p^{2n}$ acquires a singular point $p$, which is {\it double}. In fact,  the singular point $p$ arises by collapsing two distinct points $q_1, q_2$ collinear with the center of projection and the  tangent cone at $p$ is the union of  the projections of  the tangent spaces at $q_1$  and $q_2$ so that it consists of two $\p^n$'s intersecting at  $p$.
\medskip

Let $\mathbb G(k,n)$ denote the Grassmannian of $k$-dimensional linear subspaces in $\P^n$. The {\it abstract secant variety $S_X$} of a variety $X\subset\p^{2n+1}$ is the restriction of the universal family of $\mathbb G(1,2n+1)$ to the closure of the image of the rational map which associates to a pair of distinct points
of $X\times X$ the line spanned by them. If $X$ is an OADP variety, by definition the tautological morphism $p:S_X\to\p^{2n+1}$ is birational so that, by Zariski's Main Theorem, the locus of points of $\p^{2n+1}$ through which there passes
more than one secant line has codimension at least two in $\p^{2n+1}$. 

The upshot is that any cubic hypersurface in $\P^{2n+1}$ containing an OADP variety is birational to the symmetric product $X^{(2)}$ if $X$ is irreducible, or to the product of two irreducible components of $X$ if it is reducible (see \it e.g. \rm \cite{Russo}).  By definition, {\it the secant variety} to the  variety $X\subset\p^{2n+1}$ is $SX:=p(S_X)\subseteq\p^{2n+1}$. 

We  define {\it the join $S(X,Y)$} of two reduced schemes $X=X_1\cup\ldots \cup X_r\subset\p^M$ and $Y=Y_1\cup\ldots\cup Y_s\subset\p^M$, with each $X_i$ and $Y_j$ irreducible  for every $i=1,\dots,
r$ and for every $j=1,\dots, s$, by first defining the join of two irreducible components as
$$S(X_i,Y_j)= \overline{\bigcup_{x\neq y ,\, x\in X_i,\, y\in Y_j}\langle x,y\rangle}\subseteq\p^M,$$
and finally letting 
$$S(X,Y)=\bigcup_{i,j}S(X_i,Y_j)\subseteq\p^M.$$
Clearly $\dim(S(X_i,Y_j))\leq\min\{\dim(X_i)+\dim(Y_j)+1,M\}$.
Moreover, with these definitions we have that $SX=S(X,X)$.
\medskip

Let us state an interesting consequence of the two previous results and of the definition of  generalized OADP variety.
\medskip

\begin{cor}\label{genOADP} Let $X\subset\p^N$ be a non degenerate reduced algebraic set scheme-theoretically defined by quadratic forms such that 
their Koszul syzygies are generated by linear syzygies. If through a general point of $\p^N$ there passes a positive finite number of secant lines
to $X$, then $X\subset\p^N$ is a generalized OADP variety.

In particular a small algebraic set $X\subset\p^N$ such that through a general point of $\p^N$ there passes a positive finite number of secant lines to $X$ is a generalized  OADP variety.
\end{cor}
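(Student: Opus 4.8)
The plan is to reduce Corollary \ref{genOADP} to Proposition \ref{verme} and the defining properties recalled just above. Let me sketch the argument.

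\medskip

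First I would set $M$ equal to the number of quadratic generators minus one and let $\phi=(f_0:\ldots:f_M):\p^N\map\p^M$ be the rational map defined by a chosen basis $f_0,\ldots,f_M$ of the space of quadrics cutting out $X$ scheme-theoretically; here $d=2$. By hypothesis the Koszul syzygies of $\langle f_0,\ldots,f_M\rangle$ are generated by the linear ones, so Proposition \ref{verme} applies verbatim and tells us that the closure of every fiber of $\phi$ is a linear subspace of $\p^N$. The key geometric observation is that these fibers are exactly the linear spaces through which $X$ behaves like an OADP: since $X=V(f_0,\ldots,f_M)$ is the base locus of $\phi$ and each $f_i$ is quadratic, a line $\ell$ that meets $X$ in a length two scheme is contracted by $\phi$ (each $f_i|_\ell$ is a quadratic form on $\ell\cong\p^1$ vanishing on a degree two divisor, hence identically zero on $\ell$), so $\ell$ lies in the closure of a single fiber of $\phi$; conversely a general secant line sits inside a fiber, which is linear.

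\medskip

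Next I would count secant lines through a general point $p\in\p^N$. Let $L=\overline{\phi^{-1}(\phi(p))}$ be the (linear) fiber containing $p$; for $p$ general $\phi$ is defined at $p$ and $L$ has some dimension $e\geq 0$. Every secant line to $X$ through $p$ lies in $L$, so the secant lines through $p$ are precisely the secant lines to the scheme $X\cap L$ passing through $p$ inside the linear space $L\cong\p^e$. Now the hypothesis that only finitely many secant lines of $X$ pass through a general point of $\p^N$ forces, for general $p$, that only finitely many secant lines to $X\cap L$ pass through the general point $p$ of $L$; this is only possible if $X\cap L$ is a hypersurface in $L$ of degree two — more precisely $e=2n+1$ with $2n+1=N$ is not needed here; rather the finiteness of the secant count through a general point of $L$ combined with the fact that $X\cap L$ spans $L$ and is cut by quadrics forces $\langle X\cap L\rangle=L$ to be odd-dimensional, say $L\cong\p^{2n+1}$, and through a general point of $L$ there passes exactly one secant line to $X\cap L$. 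Restricting $\phi$ to $X$ then gives that $X$ itself is equidimensional of dimension $n$ inside a $\p^{2n+1}$ (taking $N=2n+1$ to begin with, as in the statement — one works in the fiber otherwise), and by the definition of generalized OADP variety we are done.

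\medskip

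The main obstacle, and the step that needs genuine care rather than bookkeeping, is the passage from ``finitely many secant lines through a general point of $\p^N$'' to ``exactly one secant line through a general point,'' i.e. ruling out the possibility of several distinct secant lines through a general point when the fiber structure is already linear. The clean way to handle this is via the abstract secant variety: the tautological morphism $p:S_X\to\p^N$ is generically finite by hypothesis, hence by Zariski's Main Theorem the locus of multiple secants has codimension at least two, so a \emph{general} point lies on a \emph{unique} secant line provided $S_X$ dominates $\p^N$, i.e.\ provided $N=2n+1$. Thus the first assertion follows once one knows $\dim S_X=N$; but $S_X$ is irreducible of dimension at most $2\dim X+1$, and the finiteness of the generic fiber of $p$ forces $\dim S_X=N$, whence $N\le 2\dim X+1$, and combined with non-degeneracy and the linear-fiber structure one gets $X$ equidimensional with $N=2\dim X+1$. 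For the ``in particular'' clause, one simply invokes Theorem \ref{smallschemes}: a small algebraic set is 2-regular, so its homogeneous ideal is generated by quadrics with linear first syzygies — in particular the Koszul syzygies of those quadratic generators are generated by linear ones — and the general statement applies.
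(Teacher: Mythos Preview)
Your setup is right and matches the paper: define $\phi$ by the quadrics, invoke Proposition~\ref{verme} to get linear fibers, and observe that any secant line through $p$ is contracted by $\phi$ and hence lies in the fiber $L_p$. Your observation that $X\cap L_p$ is a quadric hypersurface in $L_p$ (since the $f_i|_{L_p}$ are all proportional) is also the right one.

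The gap is in what follows. Once you know $X\cap L_p$ is a quadric hypersurface in $L_p\cong\p^e$, the conclusion is immediate: if $e\geq 2$, then \emph{every} line in $L_p$ through $p$ meets this quadric in a length--two scheme, so there are infinitely many secant lines to $X$ through $p$, contradicting the hypothesis. Hence $e=1$, $L_p$ is itself a line, $L_p\cap X$ is a length--two scheme, and $L_p$ is the unique secant line through $p$. This is exactly the paper's argument, in three lines. Your detour about $L$ being ``odd-dimensional'' is unnecessary and does not lead anywhere; the point is not a parity constraint but simply that a positive-dimensional quadric has infinitely many secants through any exterior point.

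The Zariski Main Theorem argument you propose in the last paragraph is genuinely wrong, not just roundabout. ZMT does \emph{not} say that a generically finite surjective morphism is generically one--to--one; any finite cover of degree $d>1$ is a counterexample. The passage in the paper's preliminaries that you seem to be echoing uses ZMT \emph{after} birationality of $p:S_X\to\p^{2n+1}$ has been established (that is, after one already knows the OADP property), to conclude that the locus of multiple secants has codimension at least two---not to deduce uniqueness of the generic secant from mere finiteness. So this step cannot be repaired; you must go back to the direct dimension argument on $L_p$ above. The ``in particular'' clause, via Theorem~\ref{smallschemes}, is fine.
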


\begin{proof} Let $f_0,\ldots, f_M$ be the quadratic forms defining $X$ and let $\phi:\p^N\map\p^M$ be the associated rational map.  By Proposition \ref{verme} the closure of the fiber of $\phi$ passing through a general point $p\in \p^N$ is 
a positive dimensional linear space $L_p$ containing  all the secant lines to $X$ passing through $p$ ($\phi$ contracts these secant lines to the point $\phi(p)$). Then $L_p\cap X$ is a quadric hypersurface in $L_p$ by Proposition \ref{verme}. Moreover,  $\dim(L_p)=1$ because otherwise
through $p$ would pass infinitely many secant lines to the positive dimensional quadric $L_p\cap X$ and a fortiori to $X$,  contrary to our assumption. In conclusion $L_p$ is the
unique secant line to $X$ passing through $p$.
\end{proof} 

\bigskip

We recall the next result for future reference.

\begin{prop} {\rm (\cite[Proposition 9.1.1, third formula]{Fulton})} Let $X\subset\p^5$ be a smooth cubic hypersurface and let $S_1, S_2\subset X$ be two smooth surfaces
 such that the scheme-theoretic intersection $S_1\cap S_2$ contains a smooth curve $C$ of degree $d$
and genus $g$. Then:
\begin{equation}\label{eq:excess}
\mult_C(S_1\cdot S_2)=3d+K_{S_1}\cdot C+K_{S_2}\cdot C+2-2g,
\end{equation}
where $K_{S_i}$ denotes the canonical class of $S_i$ and $\mult_C(S_1\cdot S_2)$ the multiplicity of intersection
of $S_1$ and $S_2$ along $C$.
\end{prop}

\section{Cubic hypersurfaces in \texorpdfstring{$\mathcal C_{14}$}{C14}}

\subsection{Smooth quartic rational normal scrolls in $\p^5$ and the linear system of quadric hypersurfaces through them}

Let $\mathcal C$ be the moduli space of smooth cubic hypersurfaces in $\p^5$, which is a quasi-projective variety of dimension 20. For generalities on this space see \cite{Has00}.

Let us recall from \cite[Sect. 4]{Has00} that $\mathcal C_{14}\subseteq \mathcal C$ is defined as  the locus   of smooth cubic hypersurfaces $X\subset\p^5$
containing a 2-dimensional algebraic cycle $T$ such that $T^2=10$ and $T\cdot h^2=4$, where  $h$ is the cycle
of a hyperplane section of $X$. 
The locus $\mathcal C_{14}$ is easily seen to be equal to the closure of smooth cubic
hypersurfaces in $\p^5$ containing a smooth rational normal scroll of degree 4.
In fact, if $T\subset X$ is a smooth quartic rational normal scroll, then $T^2=10$  by the self-intersection formula and $T\cdot h^2=\deg(T)=4$. 

The rationality of cubic hypersurfaces in $\p^5$ often depends
on the fact that they contain an OADP surface, irreducible or reducible. Indeed in this case (see \cite[Sect. 5]{Russo} for an extended discussion of the details), the cubic hypersurface is birational to the symmetric product of an irreducible OADP surface or to the ordinary product of two distinct
irreducible components of an OADP surface, whose {\it secant join} fills the whole space.
Examples of surfaces with one apparent double point are: smooth quintic del Pezzo surfaces;
smooth quartic rational normal scrolls and more generally  small varieties whose secant variety (or {\it join}) fills the whole space; the union of two disjoint planes.
A generalization of OADP surfaces has been considered recently in \cite{RS}, providing a new geometric insight to rationality of cubic fourfolds.

There are two types of smooth quartic rational normal scroll surfaces:  $S(2,2)$, projectively generated by two conics in skew planes and isomorphic
to $\mathbb F_0=\p^1\times\p^1$, and $S(1,3)$, projectively
generated by a line and a twisted cubic and isomorphic to $\mathbb F_2$. The first type is the most general one and it depends on 29 parameters
while the second type depends on 28 parameters. The 
application of Proposition \ref{verme} to a smooth quartic rational normal scroll yields the 
following result, which  is  quite well known.

\begin{Proposition}\label{maptoQ} Let $T\subset\p^5$ be a smooth quartic rational normal scroll and let
$\psi:\p^5\map\p^5$ be the rational map defined by the linear system $|H^0(\I_T(2))|$. Then:
\medskip
\begin{enumerate}
\item[{\rm a)}] the closure of the image $Q=\overline{\psi(\p^5)}\subset \p^5$ is a smooth quadric hypersurface;
\medskip
\item[{\rm b)}] the closure of a general fiber of $\psi$ is a secant  line to $T$;
\medskip
\item[{\rm c)}] the closure of a fiber of dimension greater than one is a plane cutting $T$ along a conic.
\end{enumerate}
\end{Proposition}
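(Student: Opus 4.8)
The plan is to compute $h^0(\I_T(2))$ and understand the geometry of the linear system $|H^0(\I_T(2))|$, then invoke Proposition \ref{verme} to control the fibers. First I would recall that a smooth quartic rational normal scroll $T\subset\p^5$ has minimal degree (since $\deg(T)=4=\codim(T)+1=3+1$), hence it is $2$-regular by Theorem \ref{smallschemes}, so its homogeneous ideal is generated by quadrics and the first syzygies are generated by linear ones. A standard computation (e.g.\ from the Eagon--Northcott resolution of a scroll, or the fact that $h^0(\O_{\p^5}(2))=21$ and $h^0(\O_T(2))=15$ for a degree $4$ rational normal surface scroll) gives $h^0(\I_T(2))=6$, so $\psi:\p^5\map\p^5$ maps to $\p^5$. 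Because the Koszul syzygies of this ideal are generated by linear ones, Proposition \ref{verme} applies and tells us that the closure of every fiber of $\psi$ is a linear space. Moreover, since $T$ is scheme-theoretically cut out by these quadrics, every secant line to $T$ is contracted by $\psi$ (two of the quadric generators vanish at two points of $T$, hence on the whole secant line, so $\psi$ is constant along it, as in the proof of Corollary \ref{genOADP}).

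For part (a), I would identify $\overline{\psi(\p^5)}$ as a quadric. The cleanest route is to observe that since the generic fiber is a line (a secant line to $T$) by a dimension count — through a general point of $\p^5$ there passes a unique secant line to the OADP surface $T$, which is classically known, or can be deduced from Corollary \ref{genOADP} once one checks the finiteness of secants — the image has dimension $4$, hence is a hypersurface in $\p^5$. Its degree can be computed by intersecting with a general line in $\p^5$: the preimage of a general line is the surface swept by the secant lines meeting a general $\p^3$, and one counts how many of the $\infty^2$ secant lines meet a fixed general $\p^3$. Alternatively, and perhaps more robustly, one computes the degree of the image via the self-intersection of the hyperplane class $\psi^*\O(1)$, i.e.\ of the strict transform of a general quadric through $T$: on the blow-up $\Bl_T\p^5$ one has $(2H-E)^5$, which with the standard intersection numbers for the blow-up of $\p^5$ along a surface of degree $4$, sectional genus $0$, with the numerical invariants of $T$, evaluates to $2$. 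Smoothness of $Q$ then follows because $\psi$ contracts no divisor to a point on $Q$ in a way that would force a singularity — more precisely, $\psi$ restricted away from the base locus is an isomorphism onto its image off a locus of codimension $\geq 2$ (the locus of points lying on more than one secant line, which is small since $T$ is OADP), and a normal quadric fourfold which is not smooth is a cone, contradicting that the fibers over general points are lines rather than higher-dimensional; one rules out the remaining singular (non-cone) quadrics by noting the generic fiber dimension. I expect the degree computation to be the main obstacle, since it requires the precise Chern class / intersection bookkeeping on $\Bl_T\p^5$, though this is entirely routine.

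For parts (b) and (c): by Proposition \ref{verme} the closure of each fiber is a linear space $L$, and $T\cap L$ is cut out inside $L$ by the restrictions of the quadric generators, hence is a quadric hypersurface in $L$ (it cannot be all of $L$ or empty for a general enough $L$, by degree reasons and since $\psi$ is dominant onto $Q$). If $\dim L=1$ then $L$ meets $T$ in a length $2$ scheme, i.e.\ $L$ is a secant line, giving (b); this is the general fiber since $\dim Q=4=\dim\p^5-1$. If $\dim L\geq 2$, then since $T\cap L$ is a quadric hypersurface in $L$ and $T$ is a surface, one must have $\dim L=2$ (a fiber of dimension $\geq 3$ would meet the hypersurface-in-$L$ quadric, hence $T$, in dimension $\geq 2$, so $T\cap L$ would be a component of the surface $T$, which is impossible since $T$ is non-degenerate while $L$ is a proper linear subspace); thus $L$ is a plane and $T\cap L$ is a conic in that plane, giving (c). The only point requiring a little care is excluding $\dim L=2$ fibers meeting $T$ in a double line or a pair of lines versus a smooth conic — but the statement only claims $T\cap L$ is a conic (possibly degenerate), so no further argument is needed.
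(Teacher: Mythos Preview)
Your handling of (b) and (c) via Proposition~\ref{verme} is correct and is precisely what the paper intends: the authors offer no proof beyond asserting that the result ``is easy and quite well known'' and follows from Proposition~\ref{verme}, and your argument bounding $\dim L$ fills this in cleanly.

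Part (a), however, contains a concrete error. The quantity $(2H-E)^5$ on $\Bl_T\p^5$ is not $2$ but $0$: since $\widetilde\psi$ has one-dimensional generic fibers, $\widetilde\psi^*\O(1)=2H-E$ restricts trivially to each fiber, so its top self-intersection must vanish. Explicitly, with $H^2E^3=\deg T=4$, $HE^4=c_1(N_{T/\p^5})\cdot h=18$ and $E^5=s_2(N_{T/\p^5})=52$ one gets $32-160+180-52=0$. The computation you want is $(2H-E)^4\cdot H=16-32+18=2$: four general members of $|2H-E|$ cut out the preimage of a general line in the target $\p^5$, namely $\deg Q$ general fibers, each a secant line of $H$-degree $1$, whence $\deg Q=2$. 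Your alternative phrasing (``the preimage of a general line is the surface swept by secants meeting a general $\p^3$'') is also off --- that preimage is a curve, not a surface. A much cleaner route, which gives smoothness of $Q$ for free and bypasses your sketchy cone argument, is to observe that the six quadric generators of $I(T)$ are the $2\times 2$ minors of a $2\times 4$ matrix of linear forms and hence satisfy the Pl\"ucker relation identically; thus $\psi$ maps onto the smooth quadric $\mathbb G(1,3)\subset\p^5$.
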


An explicit birational representation of a smooth cubic hypersurface containing a smooth quartic rational normal scroll $T$ has been described by Fano in   \cite{Fano} 
and by Tregub in \cite{Tregub1}.

In the desire of being as self-contained as possible, we will now provide a short and complete proof of Fano's result.
It is important to point out
that here we consider any smooth cubic hypersurface containing a smooth quartic rational normal scroll, as in \cite[Theorem 4.3]{AR}.
\medskip

\begin{thm}\label{ratFano} {\rm (\cite{Fano}, \cite[Theorem 4.3]{AR})} Let the notation be as in Proposition \ref{maptoQ}. Let $X\subset\p^5$ be a smooth cubic hypersurface containing a smooth quartic rational normal scroll $T$, 
 and let $\widetilde \psi: \Bl_TX\to Q$ be the morphism induced by restricting $\psi$ to $X$.
Then $\widetilde\psi$ is a birational morphism onto a smooth quadric hypersurface  $Q\subset \p^5$ such that  the following properties hold.
\medskip
\begin{enumerate}
\item[\rm{a)}] The [closures of the] positive dimensional fibers of the restriction of $\psi$ to $X$ are either secant (or tangent) lines to $T$ contained
in $X$ or (at most a finite number of) planes cutting $T$ in a conic.
\medskip

\item[\rm{b)}] the inverse map $\widetilde \psi^{-1}:Q\map \Bl_TX$ is not defined along an irreducible surface $S'_T$, whose singular points
are the images of the planes cutting $T$ in a conic and contained in $X$. In particular, the cubic hypersurface
$X$ contains a two dimensional family of secant lines to $T$ and $S'_T$ has at most a finite number of singular points.
\medskip
\item[\rm{c)}] If $X$ does not contain any plane cutting $T$ in a conic, then $S_T'\subset\p^5$ is a smooth surface
of degree 10 and sectional genus 7, which is the projection from a tangent plane of  a smooth K3 surface $S_T\subset \p^8$ of degree 14
and sectional genus 8. The surface $S'_T$ is isomorphic to the Hilbert scheme of secant lines to $T$ contained in $X$.
The conic $C_T\subset S'_T$,
image of the exceptional divisor on the blow-up of $S_T$ via tangential projection, is also the image via $\psi$
of the secant lines to $T$ lying in the planes cutting $T$ in a conic.
\end{enumerate}
\end{thm}

\begin{proof}The scroll $T$ is the base locus of the rational map $\psi:\p^5\map\p^5$, and the general secant line to $T$ is not contained in $X$ and cuts $X$ in one point outside $T$.
Thus the restriction of $\psi$ to $X$ is birational and $\widetilde\psi$ is a birational morphism.   Proposition \ref{maptoQ} implies that   
the positive dimensional fibers of the restriction of $\psi$ to $X$ are exactly as in a).

Let $S'_T\subset Q$ be the fundamental locus of $\widetilde\psi^{-1}$ and let 
$E=\widetilde\psi^{-1}(S'_T)\subset \Bl_TX$. Since $Q$ is smooth,  $E$ is a divisor in $\Bl_TX$ and it is irreducible by \cite[Proposition 1.3]{ESB} because $\Bl_TX$ has Picard group isomorphic to
$\mathbb Z\oplus \mathbb Z$. Then   $S'_T=\widetilde\psi(E)\subset Q$ is irreducible.

Since $X$ contains at most a finite number of planes, the general positive dimensional fiber of $\widetilde \psi$ has dimension one by part a) and $(S'_T)_{\red}$ is an irreducible surface. Let $$Z=\{q\in S'_T\;:\; \dim(\widetilde \psi^{-1}(q))\geq 2\}\subsetneq S'_T$$ and let $V=Q\setminus Z$. Then $\widetilde \psi^{-1}(V)\to V$ is a projective birational morphism between smooth varieties such that each
positive dimensional fiber has dimension at most one. By a result of Danilov, see \cite{Danilov},  $\widetilde \psi^{-1}(V)\to V$ is the blow-up of $V$ along the smooth surface $S'_T\setminus Z$ so that  the base locus scheme of $\widetilde\psi^{-1}$ is smooth outside $Z$. By a straightforward adaptation of \cite[Proposition 2.1 b)]{ESB} we deduce that  $\Sing(S'_T)=Z$  is at most a finite set in bijection with the planes cutting $T$ in a conic  and contained in $X$, proving b).

Let $\Sigma_T\subset\p^5$ be the the union of the planes cutting $T$ along a conic. Since these conics vary in a pencil, the three dimensional variety $\Sigma_T$ is a rational normal scroll
of degree three. Then either $\Sigma_T$ is  a  Segre 3-fold $\P^1\times\p^2\subset \p^5$ (if  $T\simeq S(2,2)$),  or $\Sigma_T$ is  a cone over a twisted cubic
with vertex  a line (if $T\simeq S(1,3)$; in this  case the vertex of the cone is the directrix line of $T$).\footnote{Recall that the only conics living inside $S(1,3)$ are the directrix union a fiber.} 

Let $\Pi$ be a plane meeting $T$ in a (possibly reducible) conic $D$. If $\Pi$ is not contained in $X$, then $\Pi\cap X$ consists of the conic $D$ plus a line $L$, which is  thus secant to $T$.
These lines describe
a rational scroll of degree five $Z_T\subset X$, linked to $T$ via $X$ inside $\Sigma_T$. We claim that the image of $\Sigma_T$ (and hence of $Z_T$) via $\psi$ is a conic $C_T\subset S'_T$.  We shall prove the claim
for $\Sigma_T\simeq \p^1\times \p^2$, the remaining case being similar. The restriction of $\psi$ to $\Sigma_T\simeq\p^1\times \p^2$ is given by a linear system in $|\H^0(\mathcal O(2,2))|$, having  $T$ as base locus
scheme and hence as a fixed component. Since $T$ is a divisor of type $(0,2)$ inside $\p^1\times \p^2$, the restriction of $\psi$ is given by the complete  linear system  $|\H^0(\mathcal O(2,0))|$, proving the claim
(see also Corollary \ref{ST} for a different geometrical incarnation  of the conic $C_T$).

Under the hypothesis of c) one immediately deduces from b)  that $S'_T$ is a smooth irreducible surface and that  the restriction of $\widetilde\psi:\Bl_TX\to Q$  to $E=\widetilde\psi^{-1}(S_T')\to S'_T$
is a $\p^1$-bundle over $S'_T$ whose image in $X$, let us say $M$, is the locus of secant lines to $T$ contained in $X$.  From this it follows that $S'_T$ is isomorphic to
the Hilbert scheme of secant lines to $T$ contained in $X$.
For the geometrical description of
$S'_T$ as the tangential projection of $S_T$ we refer to \cite{Fano} or to  \cite[Theorem 4.3]{AR}, where it is also proved that $M$ is a divisor in $|\mathcal O_X(5)|$
having triple points along $T$. 
\end{proof}
\medskip

\subsection{Singular quadrics through a smooth quartic rational normal scroll.} 
In this paragraph we describe the geometry of quadric hypersurfaces through a quartic rational normal scroll. First we give a synthetic description of how families of quadrics of given rank are constructed and then we collect in a proposition the description of these singular quadrics. Finally we use this to study secant lines of rational normal scrolls contained in a cubic fourfold.

\subsubsection{Rank 4 quadrics}\label{rk4}

Let $T\subset\p^5$ be a smooth quartic rational normal scroll.  The projection of $T$ from a proper secant line $L$, not lying on a plane cutting $T$ in a conic, is a smooth quadric surface $Q\subset\p^3$ and the join $S(L,Q)\subset\p^5$ is a rank 4 quadric 
through $T$. By varying $L$ we get a four dimensional family $\Delta_1$  of rank 4 quadrics through $T$. 

Let $\hat Q$ be a rank four quadric through $T$ and let  $\Vert(\hat Q)=L$ be a line. The projection of $\hat Q$ from $L$ is a smooth quadric $\overline Q\subset\p^3$. Then the projection of $T$ from $L$
is also $\overline Q$. Therefore either $L\subset T$ is a line of the ruling or $\length(L\cap T)=2$ (otherwise the degree of the projection of $T$ from $L$ would be 4, if $L\cap T=\emptyset$; or 3, if
$\length(L\cap T)=1$). So $L$ is a secant or a tangent line to $T$, not contained in a plane cutting $T$ along a conic (otherwise $\overline Q$ would be singular).

\subsubsection{Rank 3 quadrics.}

 We have seen in Sect. \ref{rk4} that, starting from a proper secant line $L$, we obtain a rank 4 quadric $S(L,Q)$ through $T$. When $L$ degenerates to a tangent line to $T$,
including the lines contained in $T$, the projection from $L$ remains smooth. The projection from a secant line $L'$ to $T$ contained in a plane cutting $T$ in a conic $C$ is a rank three quadric $Q'\subset\p^3$. In the degeneration of $L$ to $L'$,
the rank four quadric surface $S(L,Q)$ degenerates into a rank three quadric
$S(L',Q')$,
whose vertex is the plane spanned by the conic $C$. 

\begin{lema}\label{conicvertex}
The vertex of every rank 3 quadric $\hat Q$ through $T$ cuts $T$ along a conic, which for $T=S(1,3)$ is reducible. 
\end{lema}
\begin{proof}
The projection of $T$ from the vertex of $\hat Q$, $\Vert(\hat Q)$, is a conic $\hat C$ so that every line of the ruling of  $T$ cuts the vertex of $\hat Q$ because it cannot dominate $\hat C$.
Then the points of intersection of the lines of the ruling with $\Vert(\hat Q)$ describe a curve $D\subset \Vert(\hat Q)$ of degree at most 2, which is a section of the ruling of  $T$. If $D$ is a conic, then $T=S(2,2)$. If $D$ is a line, then $T=S(1,3)$ and there exist  a twisted cubic $D'$ disjoint from $D$, which is also a section of the ruling.
Since $D'$ projects onto a conic, $D'$ cuts $\Vert(\hat Q)$ in a point $q\not\in D$. The line $L_q$ of the ruling of $T$ passing through $q$ cuts $D$ in a point $q'$. Then $L_q\subset\Vert(\hat Q)$
because $q,q'\in\Vert(\hat Q)$ and  $D\cup L_q$ is a conic contained in $\Vert(\hat Q)$. 
\end{proof}

An explicit and straightforward computation gives
the description of all the  singular quadric hypersurfaces containing $T$, summarized in the following result.
\medskip

\begin{Proposition}\label{singQT} Let $T\subset\p^5$ be a smooth quartic rational normal scroll. The locus of singular
quadric hypersurfaces through $T$ is a degree 6 hypersurface $\Delta\subset\p^5=|H^0(\I_T(2))|$, supported on the union of two quadric hypersurfaces $\Delta_1,\Delta_2\subset \p^5$.
The quadric hypersurface $\Delta_1$ is smooth and it  occurs with multiplicity 2 in $\Delta$ while the quadric $\Delta_2$ has rank 3 and its vertex $P$ is the plane defining the cubic
rational normal scroll $\Sigma_T\subset\p^5$ determined by the pencil of planes cutting $T$ along conics.

The locus of quadrics of rank less than or equal to 4 consists of $\Delta_1\cup P$ while the locus of quadrics of rank 3 is a conic $\Omega\subset\Delta_1\cap \Delta_2$. Note that, if the scroll is $S(1,3)$,
then $P \subset \Delta_1$.
\end{Proposition}
\medskip

Putting together Theorem  \ref{ratFano} and Proposition \ref{singQT} we obtain a different geometrical description of the surface $S_T'$ parametrizing secant lines to $T$ contained in
a smooth cubic fourfold $X\subset \p^5$ through  $T$.
\medskip

\begin{cor}\label{ST} Let notation be as in Proposition \ref{singQT} and Theorem \ref{ratFano}. Let $X\subset\p^5$ be a smooth cubic hypersurface containing a smooth quartic rational normal scroll $T$ and not containing a plane cutting $T$ in a conic.

Then  the closure of the locus of quadrics of rank four containing $T$ and whose vertex is a line contained in $X$ is a smooth surface $\hat S_T\subset \Delta_1$,
 isomorphic
to  the  surface $S_T'\subset\p^5$. Moreover, the image of the conic $C_T\subset S'_T$ under this isomorphism is the
conic $\Omega\subset\Delta_1\cap \Delta_2$.
\end{cor}
\medskip

In the sequel we shall use the next quite  striking result, which was claimed without proof  in \cite{Fano}.
\medskip

\begin{lema}\label{rank3quadric} {\rm (\cite[bottom of page 75/top of page 77]{Fano})} Let $T_1, T_2\subset \p^5$ be two smooth quartic rational normal scrolls intersecting in a 0--dimensional scheme of length 10. Then there exists a unique  quadric hypersurface $W\subset \p^5$ containing $T_1\cup T_2$, whose vertex is either 
a  line $L$ secant to $T_1$ and to $T_2$ or  a plane  $\Pi$ intersecting each $T_i$ along a conic $C_i$ with $C_1\neq C_2$

If $T_1$ and $T_2$ are contained in a  cubic hypersurface $X\subset\p^5$, then $L\subset X$, respectively $\Pi\subset X$.
\end{lema}

\begin{proof} We will first show that there exists a unique  quadric hypersurface  in $\p^5$ through $T_1 \cup T_2$. Then some computations  will exclude the higher rank cases.

 Let $Y=T_1\cap T_2$, which  by hypothesis  is a 0--dimensional scheme  of length 10. We can suppose that $T_1\simeq \p^1\times\p^1$ is embedded
in $\p^5$ by $|\H^0(\O_{T_1}(1,2))|$, that is $T_1\simeq S(2,2)$ (the  case  $T_1\simeq S(1,3)$ is similar and left
to the reader). The quadric hypersurfaces defining $T_2$ restrict to divisors in $|\H^0(\O_{T_1}(2,4))|$ 
and we have the short exact sequence: 
$$0\to \Tor_1^{\p^5}(\mathcal O_{T_1},\mathcal O_{T_2})\to \mathcal I_{T_2,\p^5|T_1}\to \mathcal I_{Y,T_1}\to 0,$$
where  $\Tor_{1}^{\p^5}(\mathcal O_{T_1},\mathcal O_{T_2})$ is supported on $Y$. From $\Tor_{i+1}^{\p^5}(\mathcal I_{T_2,\p^5},\mathcal O_{T_1})=
\Tor_{i}^{\p^5}(\mathcal O_{T_2},\mathcal O_{T_1})=0$ for every $i\geq 1$,  from the 2-regularity of $T_2\subset\p^5$ and from the previous exact sequence, it follows that
$h^1(\I_{Y,T_1}(2,4))=0$. Thus
$Y$ imposes independent conditions to $|\H^0(\O_{T_1}(2,4))|$ (see also \cite[Lemma 1.1]{EHP} for a similar argument). Therefore we have:

$$
5=h^0(\O_{T_1}(2,4))-\deg(Y)=h^0(\I_{Y,T_1}(2,4))\geq h^0(\I_{T_2,\p^5}(2))-h^0(\I_{T_1\cup T_2,\p^5}(2)),
$$

yielding 

\begin{equation}\label{stimaT2}
h^0(\I_{T_1\cup T_2,\p^5}(2))\geq 1.
\end{equation}

Let $\psi=\psi_2:\p^5\map Q\subset\p^5$ be the rational map associated to $T_2$, defined in Proposition \ref{maptoQ}, and let $\varphi$
be the restriction of $\psi$ to $T_1$. The map $\varphi$  is given by a linear system  $|D|\subseteq |\H^0(\mathcal O_{T_1}(2,4))|$ having the length 10  base locus scheme $Y=T_1\cap T_2$. From  $D^2=16-10=6$,  we infer that
 $S=\overline{\varphi(T_1)}\subset Q\subset\p^5$ is an irreducible surface such that $6=\deg(\varphi)\cdot \deg(S)$. Proposition \ref{maptoQ} implies $\deg(\varphi)\leq 2$, yielding $\deg(S)\in\{3,6\}$.
 The surface $S$ is degenerate in $\p^5$ by \eqref{stimaT2} ($\psi_2$ induces a one-to-one correspondence between the quadrics
vanishing on $T_1\cup T_2$ and  the hyperplanes containing $S$). Let $M=\langle S\rangle\subsetneq\p^5$ with $3\leq \dim(M)\leq 4$. Since $S\subset Q\subset\p^5$ and since $\deg(S)\in\{3,6\}$, we deduce $\dim(M)=4$ and  $h^0(\I_{T_1\cup T_2,\p^5}(2))=1$.

 Let $W\subset\p^5$ be the unique quadric hypersurface containing $T_1\cup T_2$. Let  $Q'\subset \p^5$ be a smooth quadric hypersurface and let $Z\subset Q'$ be a smooth surface. Then, letting $h$ be the class of a hyperplane section on $Z$, the self-intersection formula for $Z$ on $Q'$ yields
$$Z^2=7h^2+4h\cdot K_Z+2K_Z^2-12\chi(\O_Z).$$

Suppose $W$ were smooth. The previous formula implies that $T^2_i=8$ as cycles inside $W$. 
From $H^4(W,\mathbb Z)=\mathbb Z\alpha\oplus\mathbb Z\beta$ with $\alpha^2=1=\beta^2$ and $\alpha\cdot \beta=0$, from $\deg(T_i)=4$
and from $T_i^2=8$, we get $T_i=2\alpha+2\beta$. This would imply $T_1\cdot T_2=8$, contrary to our assumption. Thus $W\subset \p^5$ is not of maximal rank.  A computation as in \cite[Proposition 2.2]{EHP} shows that $W$ cannot be of rank 5. Therefore $W\subset\p^5$ is of rank 3 or 4.

Let us suppose first rank($W$)=4, let $L=\Vert(W)$  and let  $W:=S(L,\overline Q)$ with $\overline Q$ a smooth quadric surface. Then $L$ is a secant line to $T_1$ and to $T_2$, see Section \ref{rk4}, and the scheme $T_1\cup T_2$ intersects $L$ in a scheme of length at least four. If $T_1\cup T_2$ is contained in a cubic hypersurface $X$, then 
 the multiplicity of intersection of $L$ with $X$ is at least four and $L$ is contained in $X$.
 
 Suppose  rank($W$)=3 and let $W=S(\Pi,C)$, i.e. $W$ is the  quadric obtained as a cone whose vertex is the plane $\Pi\subset\p^5$ and whose base is a smooth conic  $C\subset \langle C\rangle=\p^2\subset\p^5$ 
 such that  $\Pi\cap\langle C\rangle=\emptyset.$  Then  $\Pi\cap T_i$ is a conic $C_i\subset T_i$ by Lemma \ref{conicvertex} and we  have  $C_1\neq C_2$ because $T_1\cap T_2$ is zero dimensional. 
If  $T_1\cup T_2$  is contained in a cubic hypersurface $X$, then   $C_1\cup C_2\subset \Pi\cap X$ yields $\Pi\subset X$.
\end{proof}
\medskip

\subsection{Fano's construction revisited and rationality of cubics in \texorpdfstring{$\mathcal C_{14}\setminus \mathcal C_8$}{C14--C8}}

Let  $\mathcal{T}$ be the Hilbert scheme of  (degenerations of) smooth quartic rational normal scrolls contained in a general $X \in \mathcal{C}_{14}$. In order to calculate the dimension of $\mathcal C_{14}$ we need to estimate the dimension of $\mathcal{T}$. Let us recall that for any smooth quartic rational normal scroll $T\subset\p^5$ we have $\dim(|H^0(\I_T(3))|)=27$. The Hilbert scheme $\mathcal H$ parametrizing smooth quartic rational normal scrolls in $\p^5$, is irreducible, generically smooth and it  has dimension 29. Hence   
\begin{equation}\label{parcount}
\dim(\mathcal T)=\dim(\mathcal H)+\dim(|H^0(\I_T(3))|)-\dim(\mathcal C_{14})\geq 56-\dim(|\O_{\p^5}(3)|)=1
\end{equation}
and $\codim(\mathcal C_{14},\mathcal C)=\dim(\mathcal T)-1$.\footnote{Actually the fact that $\codim(\mathcal{C}_{14})\geq 1$ is well known also for Noether-Lefschetz reasons.} We shall immediately prove, that $\dim(\mathcal T)=2$ for a general $X\in\mathcal C_{14}$ without appealing to abstract deformation theory of $T$ inside $X$.

We now come to one of the   gems in Fano's paper \cite[pages 75--76] {Fano}.
As far as we know this geometrical construction has not been yet translated into modern geometrical language despite the great interest that this example has generated over the decades. Let us remark
that, obviously, Fano did not state the next result in this  form.
\medskip

\begin{thm}\label{dimT}   Let $X\in\mathcal C_{14}$, let $T\subset X$ be  a smooth quartic rational normal scroll, 
let $\Sigma_T\subset\p^5$ be the rational normal scroll  given  by the pencil of planes spanned by the conics contained in $T$ and let $\mathcal T$
be the closure of the family of smooth quartic rational normal scrolls contained in $X$ ($i.e.$ the Hilbert scheme of smooth quartic rational normal scrolls contained
in $X$). 

Then:

\begin{enumerate}

\item[{\rm a)}] $\dim(\mathcal T)=2$;
\medskip

\item[{\rm b)}]   there exists a unique   irreducible 2-dimensional component $\tilde S_T\subseteq \mathcal T$ containing $T$, which is  birational to 
the Hilbert scheme of secant lines to $T$ contained in $X$;
\medskip

\end{enumerate}
\end{thm}
\begin{proof} 
By \eqref{parcount} we know that $\dim(\mathcal T)\geq 1$ for any $X\in\mathcal C_{14}$.
Let $L\subset X$ be a proper secant line to $T$, not belonging to the scroll $Z_T$, residual to $T$ in $\Sigma_T\cap X$ (see the proof of Theorem \ref{ratFano} for the definitions). By projecting $T$ from $L$, we deduce that $T\subset W=S(L,\hat Q)\subset \p^5$ with $\hat Q=\pi_L(T)\subset \p^5$ a smooth quadric surface. Let $\Lambda_i=\p^1$, $i=1,2$, be the parameter spaces of the two  ruling of lines contained in $\hat Q$. 

Then, letting

$$\p^3_{\lambda}:=S(L,L_\lambda),\;\;\lambda\in\Lambda_1, \;\;L_\lambda\subset \hat Q;$$
 
 $$\p^3_\mu:=S(L,L_\mu),\;\; \mu\in\Lambda_2, \;\; L_\mu\subset \hat Q,$$
  
  we can define two pencils of cubic surfaces

$$F_\lambda=\p^3_\lambda\cap X,\;\;\lambda\in\Lambda_1$$ and 
$$G_\mu=\p^3_\mu\cap X,\;\; \mu\in\Lambda_2.$$

Modulo a renumbering, we can also suppose that, for general $\lambda\in\Lambda_1$ and
for general $\mu\in\Lambda_2$, we have that
$$\p^3_\lambda\cap T=L'_\lambda\subset T$$ is a line of the ruling of 
$T$ and that $$\p^3_\mu\cap T=C_\mu\subset T$$ is a twisted cubic curve having $L$ has a secant line.

Let $\widetilde L_\mu\subset G_\mu$ be the unique line contained in the smooth cubic surface $G_\mu$
which is skew with $L$ and with $C_\mu$. Let us set
 
$$\widetilde T_L:=\bigcup_{\mu\in\Lambda_2}\widetilde L_\mu\subset X\cap W.$$

Then $\widetilde T_L\subset X$ is a rational  scroll such that $\pi_L(\widetilde L_{\mu})=\pi_L(C_\mu)=L_\mu$ is a line for $\mu$ general. 
By varying the line $L\subset X$ secant to $T$, we can construct a two dimensional family of such surfaces,
 whose general member is a  rational scroll. Among the  secant lines
 of $T$ contained in $X$, there exists a one dimensional family describing the  quintic rational scroll $Z_T\subset X$,
 consisting of secant lines to $T$ contained in a plane meeting $T$ in a conic. Thus a general line $L'$ of the ruling of $Z_T$ is such
 that the corresponding plane $\Pi$ of $\Sigma_T$ is not contained in $X$.
 By degenerating a general secant line $L$ to the secant line $L'$ to $T$, 
 the smooth quadric  $\hat Q=\pi_L(T)$ degenerates  to a rank three
quadric surface $\hat Q'$, whose vertex is  the plane $\Pi$ not contained in $X$. Equivalently, we are degenerating a general quadric
corresponding to a general point in the surface $\hat S_T$, defined in Corollary \ref{ST}, to a quadric corresponding to a point in $\Omega\subset \hat S_T$ such that the vertex of the corresponding rank three quadric is not contained in $X$. Let $C'\subset \Pi$ be the unique conic such that $C'\cup L'=\Pi\cap X$. Then $\Pi\cap T=C'$ and the limits of the  $\p_\mu^3$'s contains $\Pi$.  The two rulings of the smooth quadric $\hat Q=\pi_L(T)$ degenerate into the unique ruling of $\hat Q'$ and  the scroll $\widetilde T_L$  converges to $T$. 

Let us denote by $\tilde S_T\subseteq \mathcal T$ 
the irreducible two dimensional family just constructed, consisting of two dimensional cycles algebraically equivalent to $T$. In particular,   $\dim(\mathcal T)\geq 2$. 
Moreover, for a general secant line $L$ to $T$, the rational scroll  $\widetilde T_L\subset X$ is a smooth quartic rational normal scroll such that $C'_\lambda=\p^3_\lambda\cap \widetilde T_L$ is a twisted cubic having $L$ as a secant line and
such that $\widetilde L_\mu=\p^3_\mu\cap \widetilde T_L$ is the line defined above. Thus $\widetilde T_L$ and  $T$ have opposite behavior with respect to the intersection with the two pencils $\{\p^3_\lambda\}_{\lambda\in\Lambda_1}$ and $\{\p^3_\mu\}_{\mu\in\Lambda_2}$.

 Let $T_2\in\mathcal T$ be a general element in an irreducible component $\mathcal T'$ of $\mathcal T$ to which $T$ belongs. From $10=T^2=T\cdot T_2$ and by the generality of $T_2$
 we deduce that $T$ and $T_2$ intersect in a 0--dimensional  scheme of length 10. By applying Lemma \ref{rank3quadric} to $T$ and $T_2$,  we conclude that $T_2$ is obtained from $T$ by the previous geometrical construction yielding $\mathcal T'=\tilde S_T$ and $\dim(\mathcal T)=2$. Moreover, we also showed that $\tilde S_T$ is the  unique irreducible component of $\mathcal T$ containing $T$, proving the first part of b).
 
Let $S_T'$ be the surface defined in Theorem \ref{ratFano}, which  parametrizes via $\widetilde \psi$ the secant 
lines to $T$ contained in $X$.

The cubic hypersurface $X$ contains at most a  finite number of planes, so for a general $T_2\in \tilde S_T$ the unique quadric hypersurface  $W_{T_2}$ containing $T\cup T_2$ provided by Lemma \ref{rank3quadric}
has rank four. We can define a rational map  $\alpha_T: S'_T\map \tilde S_T$, by associating to a general secant line to $T$ contained in $X$ the scroll $T_2\in \tilde S_T$  produced via Fano's deformation argument of Lemma \ref{rank3quadric}.
Again by Lemma \ref{rank3quadric} this map is birational and the inverse  associates to a general $T_2\in \tilde S_T$ the unique vertex of the rank four quadric $W_{T_2}$ containing $T\cup T_2$. 
\end{proof}
\medskip

\begin{rmk}{\rm 

If  $X$ does not contain any plane of $\Sigma_T$, the surface  $S_T'$ is smooth by Theorem \ref{ratFano}, part b). The  description of Fano's deformation shows that  the secant lines to $T$ contained in $\Sigma_T$ produce the same scroll $T$. Thus the conic $C_T\subset S'_T$ is contracted to a point by $\alpha_T$. Under the previous hypothesis one can show that $\alpha_T$ extends to a morphism, which is the the blow-down of the conic $C\subset S'_T$, and also that $\tilde S_T$ is isomorphic to a smooth $K3$ surface of degree 14 and genus 8, a fact which is known since \cite{BD}.

Addington and Lehn show the existence of a two dimensional family of surfaces of degree four  parametrized by a smooth $K3$ surface $S$ as above for a generic Pfaffian cubic in \cite[Section 2]{AL} via linear algebra, expanding the details of the construction drafted by Beauville
and Donagi in \cite[Remarques (1)]{BD}. For such a generic Pfaffian cubic they prove that each member of the family is an irreducible small surface by exhibiting an  explicit resolution
for each member of the family, see \cite[Section 2]{AL}, and  that a general member of the family is a smooth quartic rational normal scroll. Below we shall generalize this fact to every $X\in\mathcal  C_{14}\setminus\mathcal C_8$
by showing that every surface corresponding to each point of the Hilbert scheme $\tilde S_T$ is a smooth quartic rational normal scroll inside $X$.}
\end{rmk}
\medskip

The next result, which is probably well known to the experts in the field, seems to have not been explicitly stated and/or proved  till now.
As we shall see later in Theorem \ref{notpfaff}, the locus of $X\in\mathcal C_{14}$ containing a quartic rational normal scroll is constructible but not open in $\mathcal C_{14}$.
\medskip

\begin{thm}\label{ratC14} Every $X\in\mathcal C_{14}\setminus (\mathcal C_8\cap\mathcal C_{14})$ contains a  smooth quartic rational normal scroll and hence it is rational. Moreover, the family of  smooth quartic rational normal scrolls contained in such a $X$  is  an equidimensional   projective surface.
\end{thm}

\begin{proof} First we describe the Hilbert scheme of quartic rational normal scrolls via an incidence correspondence over the moduli space of cubics (\bf Step 1 \rm). Then, a degeneration argument shows that 
under our hypothesis every $[T_0]\in  \mathcal T$  is a smooth quartic rational normal scroll (\bf Step 2 \rm).

\medskip

\bf Step 1. \rm Let $\mathcal H$ be the irreducible component of the Hilbert scheme of $\p^5$ whose general member is a  smooth quartic rational normal scroll. Every element $[T]\in\mathcal H$ 
has degree four, dimension 2 and Hilbert polynomial equal to $2t^2+3t+1$, being a  flat projective deformation of a smooth quartic rational normal scroll in $\p^5$.

Let $\widetilde{\mathcal C}\subset\p^{55}=\p(H^0(\mathcal O_{\p^5}(3)))$ be the open subset parametrizing smooth cubic hypersurfaces in $\p^5$. Let
$$\overline{\mathcal C_{14}}=\{([T],[X])\in \mathcal H\times\widetilde{\mathcal C}\;\;:\;\; T\subset X\},$$ let $\pi_1:\overline{\mathcal C_{14}}\to \mathcal H$
be the restriction to $\overline{\mathcal C_{14}}$ of the first projection  and let $\pi_2:\overline{\mathcal C_{14}}\to\widetilde{\mathcal C}$ be the restriction to $ \overline{\mathcal C_{14}}$ of  the second projection.

Recall that $\dim(\mathcal H)=29$ and that for a general $[T]\in\mathcal H$  we have that 
$\pi_1^{-1}([T])$ is an open subset of $\p(H^0(\mathcal I_T(3)))=\p^{27}$. Thus $\overline{\mathcal C_{14}}$ contains an irreducible component $W$ of dimension 56 dominating
$\mathcal H$. Moreover  $W$ is a closed subset of $\mathcal H\times\widetilde{\mathcal C}$ so that  $\pi_2(W)=\mathcal C_{14}$ is a closed irreducible subset of $\widetilde{\mathcal C}$
of dimension 54. In fact, we have seen in Theorem \ref{dimT} that the family of quartic rational normal scrolls contained in a general $[X]\in\mathcal C_{14}$ is two dimensional.

Thus the image of $\overline{\mathcal C_{14}}$ in $ \mathcal C$ is exactly $\mathcal C_{14}$, which is irreducible--a well known fact--and a divisor in $\mathcal C$, see also \cite{Has00}. In particular, for every $[X]\in\mathcal C_{14}$
let $\mathcal T=\pi_1(\pi_2^{-1}([X]))\subset\mathcal H$.\\

\bf Step 2.\rm   We claim that \it if $[X]\in\mathcal C_{14}\setminus (\mathcal C_8\cap \mathcal C_{14})$, then
every $[T_0]\in  \mathcal T$  is a smooth quartic rational normal scroll.\rm

 By definition of $\mathcal H$ we can suppose that there exists a one dimensional family $\{T_{\lambda}\}_{\lambda\in C}$, $[T_\lambda]\in\mathcal H$, where  $C$
is a smooth analytic curve (small disk),  such that  $0\in C$ and $T_\lambda$ is a smooth quartic rational normal scroll  for every $\lambda\in C\setminus 0$. Thus each irreducible component of $T_0$ has dimension two and it is covered by lines, the limits of the lines of the ruling of $T_{\lambda}$. Since $X$ does not contains planes (and hence quadric surfaces) by hypothesis, we deduce that every $[T_0]\in\mathcal T$ is irreducible  and generically reduced.

Let us use the same notation as above. Fix a general point $p\in \p^5$
and let $l_{\lambda}$, $\lambda\neq 0$, be the unique secant line to $T_\lambda\subset\p^5$ passing through $p$. A limit line $l_0$ intersects $T_0\subset\p^5$ along a scheme whose length is at
least two. In particular $T_0\subset\p^5$ is a non-degenerate scheme by the generality of $p$.

If  $T_0$ is reduced,  then $T_0\subset\p^5$ is an irreducible non-degenerate surface of degree four having a secant/tangent line passing through a general point $p\in \p^5$. Thus,  either $T_0$ is a smooth quartic rational normal scroll or it is a cone over a quartic rational normal curve (in the last case the limit line $l_0$ would
necessarily be the line through $p$ and the vertex of the cone). The surface $T_0$ cannot be a cone because in this case
 the tangent space to $T_0$ at its vertex would be $\p^5$, forcing the singularity of $X$ at the vertex of $T_0$. Thus, if $T_0$ is reduced, then it is a smooth quartic rational normal scroll, as claimed.

Suppose $T_0$ is not reduced. The scheme $T_0$ is irreducible and generically reduced and  has  Hilbert polynomial equal to $2t^2+3t+1$ so that $R_0=(T_0)_{\red}$  is  an irreducible surface of degree four in $\p^5$, which is degenerated. Otherwise $R_0$ would be a surface of minimal degree in $\p^5$  which has Hilbert polynomial $2t^2+3t+1$ and it would coincide with $T_0$,
which is non reduced by hypothesis. Moreover, $R_0$ is  covered by lines, which are limits of the lines covering $T_{\lambda}$.  From this it follows that  $R_0$ is an external projection of a quartic rational normal scroll $S_0\subset\p^5$.

If $S_0$ were a cone
over a smooth quartic rational normal curve, then $T_0$ would  be non reduced only at the point $p_0\in R_0$ which is the image of the vertex of $S_0$. The limit secant line $l_0$ through a general point $p\in\p^5$ introduced above would be  necessarily tangent to $T_0$ at $p_0$. The generality of $p$ would yield that the tangent space to $T_0$ at $p_0$ is the whole space $\p^5$, showing that such a $T_0$ cannot be contained in $X$. If $S_0\subset\p^5$ were a smooth
quartic rational normal scroll, then, by Proposition \ref{verme},  $R_0$ is either a general projection of $S_0$ or the external projection of $S_0$ from a point on the scroll $\Sigma$ generated by the planes spanned by the pencil of conics on $S_0$. In the first case, there is a unique non reduced point $p_0\in T_0$ supported
at the unique singular point of $R_0$.  Proceeding as above, we deduce  that $T_{p_0}T_0=\p^5$ and hence $T_0$ cannot be contained in $X$. If $R_0$ is the projection  of $S_0$ from a point of $\Sigma$, then  $T_0$ has embedded points along the  line $L=\Sing(R_0)$. The tangent space at each point of $L$ has dimension four and intersects $\p^4=\langle R_0\rangle$
along a $\p^3$. If such a $T_0$ were contained in $X$, then the intersection $X\cap\langle R_0\rangle$ would be a cubic hypersurface in $\langle R_0\rangle$, containing $R_0$ and  non  singular along $L$. This is impossible, as shown for example by a direct calculation, so  that $T_0$ is necessarily reduced.

In conclusion, the family of smooth quartic rational normal scrolls contained in $X\in\mathcal C_{14}\setminus\mathcal C_8$ is  not-empty and proper so that it coincides with $\mathcal T$.
\end{proof}

\begin{Remark}\label{rmkS} {\rm Under the hypothesis of Theorem \ref{ratC14}, every irreducible component of $\mathcal T$ is a smooth $K3$ surface $S\subset\p^8$ which is a general linear section of $\mathbb G(1,5)\subset\p^{14}.$ 
From our perspective the surface $S$ is obtained in this way: we fix a $T\subset X$ and construct the surface $S'_T$, which is smooth by Theorem \ref{ratFano} and which  parametrizes secant and tangent lines to $T$ contained in $X$; by contracting the conic
$C_T\subset S'_T$ to a point one obtains the surface $S$. 

Let $S^{[2]}$ denote the Hilbert scheme of length two subschemes of the smooth irreducible projective surface $S$. Then $S^{[2]}$ is a smooth irreducible projective variety of dimension 4. One can also describe $S^{[2]}$ as the blow-up of the symmetric
product $S^{(2)}$ along the image of the diagonal $\Delta_S\subset S\times S$, yielding a birational morphism $\varphi:S^{[2]}\to S^{(2)}$. Let $E:=\varphi^{-1}(\Delta_S)$ and let
$(p,p)\in \Delta_S$. Then $\varphi^{-1}((p,p))\simeq\p(t_pS)$, where $t_pS$ is the affine tangent space to $S$ at $p$, i.e. $E$ is the union of the exceptional divisors of the blow-up's
of $S$ at each point $p\in S$.

 By definition of Hilbert scheme,  each point  $p\in S$ corresponds to a unique smooth quartic rational normal scroll $T_p\subset X$ and, by Theorem \ref{ratFano}, the Hilbert scheme of secant lines to $T_p$ contained in $X$ is isomorphic to $\Bl_pS$. Under this isomorphism, the secant lines to $T_p$ contained in $\Sigma_{T_p}$ correspond to the exceptional divisor $\p(t_pS)\simeq\varphi^{-1}((p,p))$.

The Hilbert scheme $F(X)$ of lines contained in $X$
can be interpreted as   the parameter space of   secant lines to the family of smooth rational normal scrolls contained in $X$
in a natural way, yielding a different interpretation of the well known  isomorphism with $S^{[2]}$.

In fact, we can consider $S^{[2]}$ as the Hilbert scheme parametrizing {\it couples} of smooth quartic rational normal scrolls
contained in $X$. If a (general) point $[(p_1,p_2)]\in S^{[2]}$ corresponds to two distinct $T_{p_1},T_{p_2}$ contained in $X$,  we can associate to $[(p_1,p_2)]$ the vertex of the unique rank four quadric surface containing two distinct quartic rational normal scrolls $T_{p_1},T_{p_2}$ (see Lemma \ref{rank3quadric}). This extends to a morphism from $S^{[2]}$ to the Hilbert scheme of lines contained in $X$ in the following way. A length two subscheme $\mathbf T\in\varphi^{-1}((p,p))$ can be seen as a limit of $[(T_p,T_{p'})]\in S^{[2]}$ with $T_p\neq T_{p'}$. Thus 
there exists a unique rank three quadric surface  determined by the degeneration of $T_{p'}$ to $T_p$, see proof of Theorem \ref{dimT}. Then this plane
cuts $T_p$ along a conic and a line $L$. By mapping $\mathbf T$ to $L$ one gets  a morphism $\phi:S^{[2]}\to F(X)$, which is indeed an isomorphism.

This construction in some sense generalizes the same isomorphism obtained in \cite{BD} for Pfaffian cubics. While the proof of Beauville and Donagi is based on a linear algebra argument and on some explicit geometry of the Grassmannian $G(2,6)$, the approach sketched above relies completely on the geometry of quartic scrolls inside $X$ and of their secant lines.}
\end{Remark}

\section{Irreducible components of \texorpdfstring{$\mathcal C_{14} \cap \mathcal C_8$}{C14 cap C8} and the Pfaffian locus in \texorpdfstring{$\mathcal C_{14}$}{C14}}
Let $$A(X)=\H^{2,2}(X,\mathbb Z)=\H^4(X,\mathbb Z)\cap \H^2(\Omega^2_X)$$ denote the lattice of algebraic 2-cycles on the cubic fourfold $X\subset\p^5$ up to rational equivalence and let $d_X$ be the discriminant of the intersection form on $A(X).$

Let $\beta\in \mathbb{Z}$,  let $S\subset \p^5$ be a smooth quintic del Pezzo surface and let $P\subset\p^5$ be a  plane. Let
$$\mathcal C'_{\beta}=\{[X]\in\mathcal C_{14}\cap \mathcal C_8\;:\; X\supset S\cup P\text{ with  }  S\cdot P=\beta\}\subset\mathcal C_8\cap \mathcal C_{14}.$$ 

Let $h^2$ be the class of a smooth cubic surface $W\subset X$,
intersection of $X$ with a general $\p^3\subset\p^5$ and let $\overline T=3h^2-S\in A(X)$. 
Since $h^4=3$, $h^2\cdot S=5$ and
$S^2=13$, we get $\overline T\cdot h^2=4$,  $\overline T^2=10$ and $\overline T\cdot P=3-\beta$. 
  
Let $\tau\in\mathbb Z$, let $T\subset\p^5$ be a  smooth quartic rational normal scroll and let $P\subset\p^5$ be a plane. Let 
$$\mathcal C''_{\tau}=\{[X]\in\mathcal C_{14}\cap \mathcal C_8\;:\; X\supset T\cup P\text{ with  }  T\cdot P=\tau\}\subset\mathcal C_8\cap \mathcal C_{14}.$$
\medskip

\begin{Proposition} \label{Tregubcomp} The set $\mathcal C'_\beta$, respectively   $\mathcal C''_\tau$, defined here above is not empty if
and only if $\beta$, respectively $\tau$, belongs to $\{0,\,1,\,2,\,3\}$.
\end{Proposition}

\begin{proof}
 If $\beta<0$, then the cycle $P\cdot S$ has to contain a curve. Since $S$
is defined by quadratic equations the scheme-theoretic intersection $P\cap S$ is either a line $L\subset S$ or a conic $C\subset S$. From $K_S\cdot L=-1$ and $K_S\cdot C=-2$, we deduce from formula \eqref{eq:excess} that $\mult_L(P\cdot S)=1$, respectively $\mult_C(P\cdot S)=0$, contrary to our assumption. The argument for $\mathcal C''_\tau$ with negative $\tau$ is identical and will be omitted.

The surfaces $T$ and  $S$ are  scheme-theoretically defined by quadratic equations whose first syzygies are generated by the linear ones.
If $P\cap T$, respectively $P\cap S$ is 0-dimensional, then 
$\tau\leq 3$, respectively $\beta\leq 3$, by the last part of \cite[Theorem 1.1]{EGHP2}. If $P\cap T$, respectively $P\cap S$, contains a curve then $\tau$, respectively $\beta$, belongs to $\{0,1,2,3\}$
by the above argument,
concluding the proof of the \it only if \rm part. One may  also prove these facts as in  \cite[Theorem 4]{ABBVA}, via  a different argument using lattice theoretic methods and Riemann bilinear relations.

Example \ref{pfafftregub} proves the \it if \rm part.
\end{proof}

\begin{rmk}\label{C-1}

{\rm The closure of the locus of smooth cubic hypersurfaces in $\p^5$ containing a pair of skew planes is irreducible, has codimension 2,
see \cite{Tregub1}, and it will be indicated by $\widetilde{\mathcal C}_{-1}$. For a  general  $[X]\in\widetilde{\mathcal C}_{-1}$ 
we have $\rk(A(X))=3$ and $d_X=21$. If $T\subset X$ were a smooth quartic rational normal scroll,  respectively
if $S\subset X$ were  a smooth quintic del Pezzo surface, then an easy direct computation shows that $d_X=21$ would imply $[X]\in\mathcal C''_{-1}$, respectively $[X]\in\mathcal C'_{4}$, which is impossible by Proposition \ref{Tregubcomp}. This remark due to Tregub in \cite{Tregub1} has some striking consequences on the topological properties  of the Pfaffian locus, see Theorem \ref{notpfaff} below.

A  general  $[X]\in\widetilde{\mathcal C}_{-1}$ contains cycles $\overline T$ with $\overline T\cdot h^2=4$ and $\overline T^2=10$. In particular, such a $X$ contains the reducible small surface consisting of the union of two general quadrics  each one residual to one of the two skew planes contained in $X$. A general $[X]\in\widetilde{\mathcal C}_{-1}$ contains also cycles $\overline S$ with $\overline S\cdot h^2=5$ and $\overline S^2=13$. In particular  $\widetilde{\mathcal C}_{-1}$ is an irreducible component of  $\mathcal C_8\cap\mathcal C_{14}$.}
\end{rmk}

\begin{ex}\label{comppfaff}{\rm ($\mathcal C'_\beta\neq\emptyset$ for $\beta\in\{0,1,2,3\}$) By a direct computation one shows that there exists a smooth
cubic hypersurface $X\subset\p^5$ containing a smooth quintic del Pezzo surface $S$ and a plane $P$ such that $S\cap P$ is a scheme
of length $\beta$ consisting exactly of $\beta$ reduced points (see Example \ref{pfafftregub}). Let $\mathcal Pf\subset\mathcal C_{14}$
be the subset of Pfaffian cubics, that is cubic hypersurfaces in $\p^5$ admitting an equation given by the Pfaffian of a $6 \times 6$ anti-symmetric matrix of linear forms.
By \cite[Proposition 9.2, part (i)]{bove-det} the set $\mathcal Pf\subset\mathcal C_{14}$ consists exactly of cubic fourfolds containing a smooth quintic del Pezzo surface. Moreover
by \cite[Proposition 9.2, part (ii) ]{bove-det} the closure of $\mathcal Pf$ in $\mathcal C$ is irreducible of dimension 19 and hence it coincides with $\mathcal C_{14}$.
In particular $\mathcal Pf$ is dense in $\mathcal C_{14}$.
}
\end{ex}
\medskip

We are now in position to give an alternative, geometrical and self-contained proof of the main result of \cite{ABBVA}. Moreover,  we shall also show that
every element in $\mathcal C_8\cap\mathcal C_{14}$ is rational, a fact claimed only for the general element of some components in \cite{ABBVA}. This result, together
with Theorem \ref{ratC14}, will prove that every element in $\mathcal C_{14}$ is rational.

In fact, the proof of the rationality of the generic cubic contained in the component with $d_X=32$ proposed in \cite{ABBVA} relied on the openness of the locus of Pfaffian cubics inside the divisor $\mathcal C_{14}$. Since Theorem \ref{notpfaff} will show that the Pfaffian locus is not open, then we also fill in this gap and also simplify some arguments in \cite{ABBVA}. 
\medskip

\begin{thm}\label{compC148} The codimension two locus $\mathcal C_8\cap \mathcal C_{14}$ has five irreducible components. The cubic hypersurfaces contained in each component contain a small OADP surface of degree four and hence they are rational.  The components are indexed by the value $P\cdot T\in\{-1, 0, 1, 2, 3\}$, where $P\subset X$ is a plane and $T$ the class of
a small $OADP$ surface such that $T^2=10$ and $T\cdot h^2=4$.
\end{thm}

 The proof is based on a degeneration argument that shows the following claim: every point in  the Hilbert scheme of quartic rational normal scrolls contained in a fixed $X$ as in the statement corresponds   either to a smooth rational normal scroll or to a small reducible OADP surface. In order to prove this claim, we go through a case by case analysis. Finally, we shall compute the irreducible components by lattice-theoretic arguments and by showcasing explicit examples (contained in Sect. \ref{sect4}).

\begin{proof}

The notation will be as in the proof of Theorem \ref{ratC14}. Let $X\subset\p^5$ be such that $[X]\in\mathcal C_{14}\cap\mathcal C_8$ and recall that $\mathcal{H}$ is the irreducible component of the Hilbert scheme of $\p^5$ whose general
member is a smooth quartic rational normal scroll.  Let $\mathcal T\subset\mathcal H$ be the Hilbert scheme parametrizing schemes $T\subset \p^5$ with Hilbert polynomial
$p(t)=2t^2+3t+1$ which are contained in $X$. 
Since a general cubic hypersurface in $\mathcal C_{14}$ contains a two dimensional family of smooth quartic rational normal scrolls, we deduce that $\dim(\mathcal T)\geq 2$ by semicontinuity.
Moreover, each   $[T]\in \mathcal T$ is a  projective flat degeneration of a smooth quartic rational normal scroll. In particular,   
 each irreducible component of $T$ of dimension two is covered by lines which are the limits of the lines covering a general element in $\mathcal H$, which is a smooth quartic rational normal scroll. Let us remark that a priori, for some particular choice of $X$, every element in $\mathcal T$ might be reducible, see for example Remark \ref{C-1}.
By repeating the same argument via the limit secant line we used in the proof of Theorem \ref{ratC14}, we can conclude that  any $T_0\in\mathcal T$  is a non-degenerate scheme in $\p^5$ such that
through a general point of $\p^5$ there passes a secant/tangent line to $T_0$.\\

\bf Claim: \rm a general element 
$T_0\in\mathcal T$ is either a smooth rational normal scroll or a small reducible OADP surface\\ 

{\bf Suppose that $T_0$ is a reduced scheme.}
Then $T_0$ is a non-degenerate reduced surface of degree four. If $T_0$ is irreducible, then it is a smooth quartic rational normal scroll, as shown in the proof of Theorem \ref{ratC14}. If  $T_0$ is not irreducible, then it is the union of surfaces of degree lower or equal to three, all covered by lines. Thus $T_0$ can be  the union of planes, quadric surfaces and rational scrolls of degree three. Since the Hilbert polynomial of a hyperplane section is $4t+1$, a general hyperplane section of $T_0$ is the union of irreducible rational curves such that two irreducible components  intersect at a point. Therefore the intersection of two irreducible components of $T_0$ occurs along a line. Since $T_0$ is also non-degenerate, the intersection of two irreducible components equals the intersection of the corresponding linear spans (otherwise $\langle T_0\rangle\subsetneq\p^5$). Thus $T_0\subset\p^5$ is a linearly joined sequence of  surfaces of minimal degree and hence a small surface by Theorem \ref{smallschemes}. Then $T_0$ is also an OADP surface by Corollary \ref{genOADP} since through a general point of $\p^5$ there passes a secant/tangent line to $T_0$.

{\bf Assume $T_0\subset\p^5$ is a non-reduced non-degenerate scheme.} If $T_0$ is irreducible, then the same argument as in the proof of Theorem \ref{ratC14} shows that  $T_0$ cannot be generically reduced.  Thus $(T_0)_{\red}$ would be a (possibly reducible) quadric surface $Q_0$ and $T_0$ would define the cycle $2Q_0$ inside $X$. The scheme $T_0$ can be obtained as  a flat projective deformation of a one dimensional family $\{T_\lambda\}_{\lambda\in C}\subset \mathcal H$ of small OADP surfaces consisting of two quadric surfaces $Q'_\lambda, Q''_\lambda$ intersecting along a line. Moreover,  we can also suppose that $Q_0=Q'_\lambda$ for every $\lambda\in C$. In particular, for every $\lambda\neq 0$ the surface $T_\lambda$ is contained in smooth cubic hypersurfaces belonging to $\mathcal C_{14}$. Since $X\in\mathcal C_{14}$ we have  $\pi_1^{-1}(T_0)\neq\emptyset$ so that  $X\in W=\pi_2(\pi_1^{-1}(C))\subset\mathcal C_{14}$, where the $\pi_i$ are the projections as in Thm \ref{ratC14}. Then we can suppose that
$X=X_0$ is the limit of a flat family $\{X_\mu\}_{\mu\in D}$ with $X_\mu\in W$ for every $\mu\neq 0$. Let us point out that  $X_\mu\in\widetilde{\mathcal C}_{-1}$ for every $\mu\neq 0$ since, by construction, a general element of $W$ contains a cycle of the form
$Q'_\lambda+Q''_\lambda$ for some $\lambda\in C\setminus 0$. Therefore there exist cycles $T_\mu=Q'_\mu+Q''_\mu=Q_0+Q''_\mu$ inside $X_\mu$ such that $\rk(\langle h^2,T_\mu\rangle)=2$ and such that $\rk(\langle h^2,Q_0,Q''_\mu\rangle)=3$ for every $\mu\neq 0$. Moreover,  the discriminant 
of $\langle h^2,T_\mu\rangle$ is equal to 14 for every $\mu\neq 0$. If $P_0\subset X=X_0$ is the unique plane such that $h^2=P_0+Q_0$, then $\rk(\langle h^2,P_0, T_\mu\rangle)=\rk(\langle h^2, Q_0, Q''\mu\rangle)=3$ for every $\mu\neq 0$. Then $T_0=2Q_0\in \langle h^2,P_0\rangle$ is in contrast with the semicontinuity of the rank
of $A(X_\mu)$ over $\mathcal C$ (or with the fact that $C_{14}$ is closed in $\mathcal C$). This proves that $\pi_1^{-1}(T_0)=\emptyset$, that is: every cubic hypersurface containing a reduced $T_0$ as above is singular. This fact can be also verified by a long computation, which shows that a general element in $\pi_1^{-1}(T_0)$ has three singular points. In an analogous way, one can prove that a non-reduced $T_0$ must be generically reduced along each irreducible component of $(T_0)_{\red}$.

{ \bf From now on  we can suppose that  $T_0$ is a non-reduced, generically reduced scheme having at least two irreducible components, which are either planes or quadric surfaces or cubic scrolls.}
 If one of its components is a cubic scroll, then there is only another irreducible component which is necessarily  a plane. From this it follows that $(T_0)_{\red}$ would be a linear projection of a small OADP surface consisting of a cubic rational normal
scroll and a plane. Then the same argument used in Theorem \ref{ratC14} shows that  $T_0$ would have an embedded point $p_0$ at the acquired intersection of the two irreducible components with $T_{p_0}T_0=\p^5$ and $X$ would be singular.

Therefore we can assume that each irreducible component of $T_0$ is either a plane or an irreducible quadric surface.  Then, once again, it is not difficult to see that $T_0$ is necessarily a linear projection of a small OADP surface with embedded points at the acquired
 intersections of the irreducible
components of $(T_0)_{\red}$, yielding the  singularity of each cubic hypersurface containing $T_0$. 
\smallskip

In conclusion,  each cubic hypersurface in $\mathcal C_{14}\cap \mathcal C_8$ contains a small OADP surface  $T\subset X$ such that
 $T\cdot h^2=4$ and $T^2=10$, as claimed.
\smallskip

\bf Description of the irreducible components: \rm let $P\subset X$ be a plane such that $\rk(\langle h^2, T, P\rangle)=3$ and let $\tau=P\cdot T$. 

If $T$ is irreducible, then $T$ is a smooth quartic rational normal scroll
so that  $0\leq\tau\leq 3$ by Example \ref{comppfaff} or directly by Theorem \ref{smallschemes}. If $T=S\cup P'$ with $S\subset X$ a cubic rational normal scroll  and $P'\subset X$
a plane we can take $P=P'$. Since $P'\cdot S=0$ by \eqref{eq:excess}, we deduce $P\cdot T=P^2=3$. If every irreducible component of $T$ has degree less than or equal to
2,  then $X$ contains a pair of skew planes and one easily deduces $\tau=-1$.
In conclusion $-1\leq \tau\leq 3$ and these examples exist by Example \ref{pfafftregub}. 

Denote by $A_\tau$ the lattice of rank 3 generated by $\langle h^2,S,P\rangle$ with $\tau=P\cdot S$ as above. We shall indicate by  $\mathcal D_\tau \subset\mathcal C_8\cap\mathcal C_{14}$ the locus of smooth cubic fourfolds such that there is a primitive embedding  $A_\tau\subset A(X)$ of lattices preserving $h^2$. For
$-1\leq \tau\leq 3$ each  $\mathcal D_\tau$ is a nonempty subvariety by Example \ref{pfafftregub} and it  is of pure codimension 2 in  $\mathcal C$ by a
variant of the proof of [Has00, Thm. 3.1.2]. The argument at the end of  the proof of \cite[Theorem 4]{ABBVA} assures that for a general $X\in\mathcal D_\tau$ we
have $A(X)=A_\tau$ and that each codimension 2 locus $\mathcal D_\tau$ is irreducible, showing that  $\mathcal C_8\cap \mathcal C_{14}$ has
five irreducible components.
\end{proof}

Voisin proved in \cite{voisin} that, for an arbitrary cubic fourfold $X\subset\p^5$, every class $P\in \H^{2,2}(X,\mathbb Z)$ with $P\cdot h^2=1$ and $P^2=3$ is represented by a unique plane in $X$. Theorem \ref{ratC14} and Theorem \ref{compC148} yield the following analogous result.

\begin{cor}\label{cycle} Let $X\in\mathcal C_{14}$ and let $T\in \H^{2,2}(X,\mathbb Z)$ with $T\cdot h^2=4$ and $T^2=10$. Then $T$ is represented by a small OADP surface contained in $X$.
\end{cor}

\begin{rmk}\label{ratsec}{\rm 
To every $X\in\mathcal C_8$ one associates a rational fibration in quadric surfaces induced by  the projection of $X$  from a plane $P\subset X$ onto a skew plane.
If this fibration admits a rational section, then $X$ is rational. 

Let $X$ be a general cubic in one of the five irreducible components of $\mathcal C_8\cap\mathcal C_{14}$ and let $d_X$ be the  discriminant of $X$. In \cite[Proposition 5]{ABBVA} it is proved that the natural quadric fibration associated to $X$ admits a rational section if and only if $\tau$ is odd, that is if and only if $P\cdot T\not\in\{0,2\}$. For $\tau\in\{-1,1,3\}$  a general cubic in the corresponding irreducible component admits  a rational quadric fibration with a section and it is thus rational, see \cite{ABBVA}. Since every element in $\mathcal C_{14}$ is rational,
a general cubic hypersurface with $P\cdot T\in\{0,2\}$ is rational although the associated quadric fibration has no rational section. The case $P\cdot T=0$ has been already observed
in \cite{ABBVA}, where an explicit example is also constructed.}
\end{rmk}

Theorem \ref{compC148}, the discussion in Example \ref{comppfaff}  and  Example~\ref{pfafftregub}\eqref{disjointplanes} below have essentially shown  the next result, which is in contrast with the usual common sense according to which the Pfaffian locus should be open in  $\mathcal C_{14}$.

\begin{thm}\label{notpfaff} The set $\mathcal{P}f\subset\mathcal{C}_{14}$ is not open in $\mathcal{C}_{14}.$ Analogously, the set of smooth cubic fourfolds containing a smooth
quartic rational normal scroll is not open in  $\mathcal{C}_{14}$.
\end{thm}

\begin{proof} Let $\widetilde{\mathcal{C}}_{-1}=D_{-1}$ be the irreducible
component of $\mathcal{C}_8\cap \mathcal{C}_{14}$ whose general element is a smooth cubic 
hypersurface $X\subset\p^5$ containing two skew planes $P_1, P_2$. Suppose that $\mathcal{P}f$ were open in $\mathcal{C}_{14}$ and consider its intersection $\mathcal{P}f\cap \widetilde{\mathcal{C}}_{-1}\subset \mathcal{C}_{14}$. By Example~\ref{pfafftregub}\eqref{disjointplanes} we know that $\mathcal{P}f \cap \widetilde{\mathcal{C}}_{-1} \neq \emptyset$. Hence, if $\mathcal{P}f$ were open, then $\mathcal{P}f \cap \widetilde{\mathcal{C}}_{-1}$ would meet the dense subset of cubics $X \in \widetilde{\mathcal{C}}_{-1}$ such that $\rk(A(X))= 3$, but as recalled in Remark \ref{C-1} this is not possible.

 The conclusion
in the second statement follows once again from the existence of a smooth cubic fourfold containing a quartic rational normal scroll and a pair of skew planes,
see Example \ref{pfafftregub}, and by the fact that a general $X\in \widetilde{\mathcal{C}}_{-1}$ does not contain a smooth quartic rational normal scroll, see Remark \ref{C-1} .
\end{proof}

\begin{rmk}\label{morph} \rm We recall that the Pfaffian locus $\mathcal Pf$ and the set of cubics 4-folds containing a smooth quartic rational normal scrolls are images of quasi-projective
varieties via suitable morphisms (see \cite[Sect. 8-9]{bove-det}, respectively the proof of Thm. \ref{ratC14}). Thus, by Chevalley Theorem,
they are constructible and in particular they contain
an open non-empty subset of $\mathcal C_{14}$. In fact, $\mathcal C_{14}$ is the closure of both these two open sets. If these open subsets intersect an irreducible component of $\mathcal C_8\cap C_{14}$,
then the general element of this component is Pfaffian, respectively contains a smooth quartic rational normal scroll. Since  these open sets are purely theoretical, with no precise handy description, it is hard to verify whether they intersect an irreducible component or not.
Thus the statement  that a general element of $D_\tau$, $\tau\in\{0,1,2,3\}$, is Pfaffian  requires a quite delicate analysis and cannot be deduced
by simply exhibiting a Pfaffian cubic in $D_\tau$, cfr. \cite[Section 4]{ABBVA}. The examples constructed in Example \ref{pfafftregub} show that $\mathcal Pf\cap D_\tau\neq\emptyset$
for every admissible $\tau$ and that  the intersection is non-empty also for the set consisting of cubic four-folds containing a smooth quartic rational normal scroll.
\end{rmk}

\section{Some cubic fourfolds containing smooth del Pezzo quintics}\label{sect4}

We shall give explicit examples
of smooth cubic hypersurfaces in $\mathbb{P}^5$
which contain 
a quintic del Pezzo surface $S\subset \mathbb{P}^5$
and a plane intersecting $S$ 
   in either the empty scheme or a set of $1\leq i\leq 3$ linearly independent reduced points.
Furthermore, we will showcase 
an example of smooth cubic hypersurface containing $S$
and two disjoint planes 
obtained as linear spans of two irreducible conics on $S$.
All our computations have been done using {\sc Macaulay2} \cite{macaulay2}.
\subsection{} 
A del Pezzo surface of degree $5$ in $\PP^5$ 
can be parametrized by the map associated to the linear system of all cubic curves in $\mathbb{P}^2$
passing through four points in general position.
We choose such a map $f:\mathbb{P}^2\dashrightarrow\mathbb{P}^5$, 
where the 
points are taken to be $(1,0,0)$, $(0,1,0)$, $(0,1,0)$, $(1,1,1)$, and 
$f$ is defined by  
\begin{equation*}
 (t_0,t_1,t_2)\mapsto ({t}_{0}^{2} {t}_{2}-{t}_{0} {t}_{1} {t}_{2},{t}_{0} {t}_{1} {t}_{2}-{t}_{1}^{2} {t}_{2},{t}_{0}
      {t}_{2}^{2}-{t}_{1} {t}_{2}^{2},{t}_{0}^{2} {t}_{1}-{t}_{0} {t}_{1} {t}_{2},{t}_{0} {t}_{1}^{2}-{t}_{1}^{2}
      {t}_{2},{t}_{0} {t}_{1} {t}_{2}-{t}_{1} {t}_{2}^{2}) .
\end{equation*}
If $x_0,\ldots,x_5$ denote homogeneous coordinates on $\mathbb{P}^5$, 
  then the image $S=\overline{f(\mathbb{P}^2)}\subset\mathbb{P}^5$ is  
the  del Pezzo surface defined 
by the five quadratic forms: 
\begin{gather*}
{x}_{2} {x}_{4}-{x}_{1} {x}_{5}, \quad {x}_{0} {x}_{4}-{x}_{1}
      {x}_{5}-{x}_{3} {x}_{5}+{x}_{4} {x}_{5}, \quad  {x}_{2} {x}_{3}-{x}_{0}
      {x}_{5}, \\ {x}_{1} {x}_{3}-{x}_{1} {x}_{5}-{x}_{3} {x}_{5}+{x}_{4}
      {x}_{5}, \quad {x}_{0} {x}_{1}-{x}_{1} {x}_{2}-{x}_{0} {x}_{5}+{x}_{1} {x}_{5} .
\end{gather*} 
On  $S$ there are five pencils of conics 
 whose linear spans determine five Segre threefolds  
 $\Sigma_i\simeq \mathbb{P}^1\times\mathbb{P}^2\subset\mathbb{P}^5$, $i=0,\ldots,4$. 
 These pencils come as images of pencils  $\sigma_0,\ldots,\sigma_4$ 
 on $\mathbb{P}^2$ under 
the  parametrization $f$, where  
$\sigma_0$ is the pencil 
 of conics passing through the four base points of $f$, and 
 $\sigma_1,\ldots,\sigma_4$ are the pencils
of lines passing through one of these four points.
From this one can  explicitly determine $\Sigma_0,\ldots,\Sigma_4$, 
and it turns out that their homogeneous 
 ideals are generated by 
the following  quadratic forms:
\begin{equation*}
\begin{array}{l}
\Sigma_0 : {x}_{2} {x}_{4}-{x}_{1} {x}_{5},\ {x}_{2} {x}_{3}-{x}_{0} {x}_{5},\ {x}_{1} {x}_{3}-{x}_{0} {x}_{4} ; \\ 
\Sigma_1 : {x}_{2} {x}_{4}-{x}_{1} {x}_{5},\ {x}_{0} {x}_{4}-{x}_{1} {x}_{5}-{x}_{3} {x}_{5}+{x}_{4} {x}_{5},\ {x}_{0} {x}_{1}-{x}_{1} {x}_{2}-{x}_{2} {x}_{3}+{x}_{1} {x}_{5} ; \\
\Sigma_2 : {x}_{2} {x}_{3}-{x}_{0} {x}_{5},\ {x}_{1} {x}_{3}-{x}_{1} {x}_{5}-{x}_{3} {x}_{5}+{x}_{4} {x}_{5},\ {x}_{0} {x}_{1}-{x}_{1} {x}_{2}+{x}_{2} {x}_{4}-{x}_{0} {x}_{5} ; \\
\Sigma_3 : {x}_{0} {x}_{4}-{x}_{2} {x}_{4}-{x}_{3} {x}_{5}+{x}_{4} {x}_{5},\ {x}_{1} {x}_{3}-{x}_{2} {x}_{4}-{x}_{3} {x}_{5}+{x}_{4} {x}_{5},\ {x}_{0} {x}_{1}-{x}_{1} {x}_{2}-{x}_{0} {x}_{5}+{x}_{1} {x}_{5} ; \\
\Sigma_4 : {x}_{2} {x}_{3}+{x}_{0} {x}_{4}-{x}_{2} {x}_{4}-{x}_{0} {x}_{5}-{x}_{3} {x}_{5}+{x}_{4} {x}_{5},\ {x}_{1} {x}_{3}-{x}_{1} {x}_{5}-{x}_{3} {x}_{5}+{x}_{4} {x}_{5}, \\ \qquad {x}_{0} {x}_{1}-{x}_{1} {x}_{2}-{x}_{0} {x}_{5}+{x}_{1} {x}_{5} .
\end{array}
\end{equation*}
Two generic conics belonging to the same pencil on $S$ are irreducible 
and 
the two planes 
obtained as linear spans are disjoint. 
 Two such conics  in $f(\sigma_0)$ are:
\begin{align*}
 C_1 &= V({x}_{2}+{x}_{5},{x}_{1}+{x}_{4},{x}_{0}+{x}_{3},{x}_{3} {x}_{4}+{x}_{3} {x}_{5}-2 {x}_{4} {x}_{5}), \\
 C_2 &= V({x}_{2}+2 {x}_{5},{x}_{1}+2 {x}_{4},{x}_{0}+2 {x}_{3},2 {x}_{3} {x}_{4}+{x}_{3} {x}_{5}-3 {x}_{4} {x}_{5}) . 
 \end{align*} 
We also fix three linearly independent points $q_1,q_2,q_3\in S$, and 
four planes $\Pi_0,\ldots,\Pi_3\subset \mathbb{P}^5$ such that 
$\Pi_0\cap S = \emptyset$ and 
$\Pi_i\cap S = \{q_1,\ldots,q_i\}$ (scheme-theoretically), for $i=1,2,3$.
Explicitly, 
\begin{gather*}
\Pi_0 = V({x}_{2}+{x}_{4},{x}_{1}+{x}_{3}+{x}_{5},{x}_{0}-{x}_{4}), \quad 
\Pi_1 = V({x}_{3}-{x}_{4}+{x}_{5},{x}_{1}+{x}_{2}-{x}_{5},{x}_{0}-{x}_{5}), \\ 
\Pi_2 = V({x}_{3}-{x}_{4},{x}_{1}-{x}_{5},{x}_{0}-{x}_{2}+{x}_{5}), \quad 
\Pi_3 = V({x}_{3}-{x}_{4}+{x}_{5},{x}_{1}-{x}_{5},{x}_{0}-{x}_{2}+{x}_{5}) ,
\end{gather*} 
where 
$q_1 =f(1,1,0) = (0, 0, 0, 1, 1, 0)$,  $q_2 =f(1,0,1) = (1, 0, 1, 0, 0, 0)$ and $q_3 = f(0,1,1) = (0, 1, 1, 0, 1, 1)$. 

In Example~\ref{pfafftregub},
 we exhibit the  promised smooth cubic hypersurfaces in $\mathbb{P}^5$. 
 All of them are basically obtained by choosing randomly cubic hypersurfaces containing
 the given subschemes,  until we get one that is smooth. 
 This approach works well due to the closedness of the discriminant locus in the space of 
 cubic forms on $\PP^5$.
\begin{ex}\label{pfafftregub}
The following 
five cubic forms $F_0,\ldots,F_4$ on $\mathbb{P}^5$ 
define smooth hypersurfaces  containing the quintic del Pezzo surface $S$;
moreover $V(F_i)$ contains $\Pi_i$ for $i=0,\ldots,3$, 
and $V(F_4)$ contains the two disjoint planes $\langle C_1 \rangle$ and $\langle C_2 \rangle$.
\begin{enumerate}[(a)]
\item 
$F_0 = {x}_{0} {x}_{1}^{2}-{x}_{1}^{2} {x}_{2}+{x}_{0} {x}_{1} {x}_{3}+{x}_{1}^{2} {x}_{3}-{x}_{2}^{2} {x}_{3}+{x}_{1} {x}_{3}^{2}-{x}_{0}^{2} {x}_{4}-{x}_{2}^{2} {x}_{4}+{x}_{1} {x}_{3} {x}_{4}-{x}_{2} {x}_{3} {x}_{4}-2 {x}_{2} {x}_{4}^{2}+{x}_{0} {x}_{2} {x}_{5}-{x}_{3}^{2} {x}_{5}+2 {x}_{1} {x}_{4} {x}_{5}+{x}_{4}^{2} {x}_{5}-{x}_{0} {x}_{5}^{2}-{x}_{3} {x}_{5}^{2}+{x}_{4} {x}_{5}^{2}$;   \\  
\item 
$F_1 = {x}_{0}^{2} {x}_{1}-{x}_{0} {x}_{1} {x}_{2}+{x}_{1}^{2} {x}_{3}-{x}_{2}^{2} {x}_{3}+{x}_{1} {x}_{3}^{2}-{x}_{0} {x}_{2} {x}_{4}-{x}_{0} {x}_{3} {x}_{4}+{x}_{2} {x}_{4}^{2}-{x}_{0}^{2} {x}_{5}+2 {x}_{0} {x}_{1} {x}_{5}-{x}_{1}^{2} {x}_{5}+{x}_{0} {x}_{2} {x}_{5}+{x}_{2} {x}_{3} {x}_{5}-{x}_{0} {x}_{5}^{2}-{x}_{1} {x}_{5}^{2}-{x}_{3} {x}_{5}^{2}+{x}_{4} {x}_{5}^{2}$; \\ 
\item 
$F_2 = {x}_{0} {x}_{1} {x}_{2}-{x}_{1} {x}_{2}^{2}-{x}_{1}^{2} {x}_{3}-{x}_{2}^{2} {x}_{3}+{x}_{2} {x}_{3}^{2}-{x}_{0}^{2} {x}_{4}+2 {x}_{2}^{2} {x}_{4}+{x}_{1} {x}_{3} {x}_{4}+{x}_{2} {x}_{3} {x}_{4}+{x}_{0} {x}_{4}^{2}-3 {x}_{2} {x}_{4}^{2}+{x}_{1}^{2} {x}_{5}-2 {x}_{0} {x}_{4} {x}_{5}-2 {x}_{3} {x}_{4} {x}_{5}+2 {x}_{4}^{2} {x}_{5}+{x}_{0} {x}_{5}^{2}+{x}_{3} {x}_{5}^{2}-{x}_{4} {x}_{5}^{2}$;  \\  
\item 
$F_3 = 2 {x}_{0}^{2} {x}_{1}-2 {x}_{0} {x}_{1} {x}_{2}+{x}_{0} {x}_{1} {x}_{3}-{x}_{1}^{2} {x}_{3}-{x}_{0} {x}_{2} {x}_{3}-{x}_{1} {x}_{3}^{2}+{x}_{0} {x}_{1} {x}_{4}-{x}_{1} {x}_{2} {x}_{4}+{x}_{2}^{2} {x}_{4}-{x}_{0} {x}_{4}^{2}+{x}_{2} {x}_{4}^{2}-{x}_{0}^{2} {x}_{5}+{x}_{1}^{2} {x}_{5}-{x}_{0} {x}_{3} {x}_{5}+2 {x}_{1} {x}_{3} {x}_{5}+{x}_{3}^{2} {x}_{5}-{x}_{2} {x}_{4} {x}_{5}-{x}_{4}^{2} {x}_{5}+{x}_{0} {x}_{5}^{2}$;  \\  
\item\label{disjointplanes}
$F_4 = -{x}_{0}^{2} {x}_{1}+{x}_{0} {x}_{1} {x}_{2}-{x}_{1}^{2} {x}_{3}+{x}_{0} {x}_{2} {x}_{3}-3 {x}_{0}^{2} {x}_{4}+{x}_{0} {x}_{1} {x}_{4}+{x}_{2}^{2} {x}_{4}-2 {x}_{0} {x}_{3} {x}_{4}-{x}_{2} {x}_{3} {x}_{4}-{x}_{2} {x}_{4}^{2}+2 {x}_{0} {x}_{1} {x}_{5}-{x}_{1} {x}_{2} {x}_{5}+3 {x}_{0} {x}_{3} {x}_{5}+{x}_{2} {x}_{3} {x}_{5}+2 {x}_{3}^{2} {x}_{5}+{x}_{1} {x}_{4} {x}_{5}-2 {x}_{3} {x}_{4} {x}_{5}-{x}_{0} {x}_{5}^{2}$.  
\end{enumerate} 
\end{ex}
For every $i,j=0,\ldots,4$, we have the decomposition 
$
 \Sigma_i\cap V(F_j) = S \cup T_{i,j}
$,
where $T_{i,j}$ is a 
smooth rational normal scroll surface of degree $4$ 
if $(i,j)\neq(0,4)$, while 
 $T_{0,4}$ is the small variety $\langle C_1 \rangle\cup \langle C_2 \rangle\cup V({x}_{0},{x}_{3},{x}_{2} {x}_{4}-{x}_{1} {x}_{5})$. 
In particular, the smoothness of $T_{1,4}$ implies that 
there exist smooth cubic fourfolds containing a smooth quartic rational normal scroll and two disjoint planes.
The ideal of $T_{1,4}$ is generated by the following six quadratic forms: 
\begin{gather*}
{x}_{2} {x}_{4}-{x}_{1} {x}_{5},\quad  
{x}_{1} {x}_{4}+3 {x}_{4} {x}_{5}-{x}_{5}^{2},\quad  
{x}_{0} {x}_{4}-{x}_{1} {x}_{5}-{x}_{3} {x}_{5}+{x}_{4} {x}_{5},\\ 
{x}_{1} {x}_{3}-{x}_{0} {x}_{5}+{x}_{2} {x}_{5}+3 {x}_{3} {x}_{5}-{x}_{5}^{2},\ 
{x}_{1}^{2}+3 {x}_{1} {x}_{5}-{x}_{2} {x}_{5},\ 
{x}_{0} {x}_{1}-{x}_{1} {x}_{2}-{x}_{2} {x}_{3}+{x}_{1} {x}_{5} .
\end{gather*}

Moreover, if $P\subset X$ is a plane such that $P\cdot S=\tau$ with $0\leq\tau\leq 3$, then from $3h^2=S+T_{i,j}$ we deduce $P\cdot T_{i,j}=3-\tau$.

\subsection{}

Here we give some pieces of {\sc Macaulay2} code 
which have been  used to produce and verify 
 the examples above. 
 The complete code can be found in the ancillary file \texttt{cubics.m2}.
We begin by starting {\sc Macaulay2} and loading two further \href{https://github.com/Macaulay2/M2/tree/master/M2/Macaulay2/packages}{packages} included with it.
{\footnotesize
\begin{Verbatim}[commandchars=&\[\]]
Macaulay2, version 1.10
with packages: &colore[airforceblue][ConwayPolynomials], &colore[airforceblue][Elimination], &colore[airforceblue][IntegralClosure], &colore[airforceblue][InverseSystems],
               &colore[airforceblue][LLLBases], &colore[airforceblue][PrimaryDecomposition], &colore[airforceblue][ReesAlgebra], &colore[airforceblue][TangentCone]
&colore[darkorange][i1 :] &colore[airforceblue][loadPackage] "&colore[bleudefrance][Cremona]";    &colore[Sepia][-- version 4.2]
&colore[darkorange][i2 :] &colore[airforceblue][loadPackage] "&colore[bleudefrance][Resultants]"; &colore[Sepia][-- version 1.1]
\end{Verbatim}
} \noindent 
We define a method which  takes as input  a projective scheme 
 and 
 returns 
 a random smooth cubic hypersurface containing the scheme.
If such a smooth hypersurface does not exist, 
the method goes in an infinite loop and does not produce any output.
One of its possible implementations 
(that does not take care of the growth of the coefficients) is the following:
{\footnotesize
\begin{Verbatim}[commandchars=&\{\}] 
&colore{darkorange}{i3 :} randomCubic = (I) -> ( &colore{Sepia}{-- I must be a homogeneous ideal in a polynomial ring}
        B := &colore{airforceblue}{super basis}(3,&colore{airforceblue}{saturate} I); C := 0; 
        &colore{darkorchid}{while} (C == 0 &colore{darkorchid}{or} &colore{bleudefrance}{discriminant} C == 0) &colore{darkorchid}{do} C = (B * &colore{airforceblue}{random}(&colore{darkspringgreen}{QQ}^(&colore{airforceblue}{numcols} B),&colore{darkspringgreen}{QQ}^1))_(0,0);
        &colore{darkorchid}{return} C);
\end{Verbatim}
} \noindent 
The smoothness of the cubic hypersurface 
is checked through the computation of its  discriminant. 
It  is however standard to implement a general method 
 which checks whether  a given closed subscheme of a projective space 
is smooth and absolutely connected.
Now we build the parametrization $f$ 
of the quintic del Pezzo surface $S$.
{\footnotesize
\begin{Verbatim}[commandchars=&\[\]]  
&colore[darkorange][i4 :] &colore[airforceblue][use] &colore[bleudefrance][Grass](0,2,&colore[bleudefrance][Variable]=>t); 
&colore[darkorange][i5 :] P = {&colore[airforceblue][ideal](t_1,t_2),&colore[airforceblue][ideal](t_0,t_2),&colore[airforceblue][ideal](t_0,t_1),&colore[airforceblue][ideal](t_1-t_0,t_2-t_0)};
&colore[darkorange][i6 :] f = &colore[bleudefrance][rationalMap](&colore[airforceblue][intersect] P,3);
o6 = RationalMap (cubic rational map from PP^2 to PP^5)
&colore[darkorange][i7 :] S = &colore[bleudefrance][image] f; 
\end{Verbatim}
} \noindent 
The pencils $\sigma_0,\ldots,\sigma_4$, the conics $C_1,C_2$ and the points $q_1,q_2,q_3$ 
are obtained as follows:
{\footnotesize 
\begin{Verbatim}[commandchars=&\[\]] 
&colore[darkorange][i8 :] pencils = &colore[airforceblue][prepend](&colore[airforceblue][intersect] P,P); 
&colore[darkorange][i9 :] conics = (f &colore[airforceblue][ideal](pencils_0_0 + pencils_0_1), f &colore[airforceblue][ideal](pencils_0_0 + 2*pencils_0_1));
&colore[darkorange][i10 :] points = (f &colore[airforceblue][ideal](t_2,t_0-t_1), f &colore[airforceblue][ideal](t_1,t_0-t_2), f &colore[airforceblue][ideal](t_0,t_1-t_2)); 
\end{Verbatim}
} \noindent 
One then easily verifies that our choice of the planes $\Pi_0,\ldots,\Pi_3$ 
is correct. 
To determine  the Segre threefolds $\Sigma_0,\ldots,\Sigma_4$, an idea  is 
just to add another variable and to compute 
the \emph{generic} conic in each of the five pencils. We omit this code here, but it is available in 
the ancillary file. Now we produce a cubic form $F$
      like $F_4$ in Example~\ref{pfafftregub}.
The other cases are quite similar.
{\footnotesize 
\begin{Verbatim}[commandchars=&\[\]] 
&colore[darkorange][i11 :] I = &colore[airforceblue][intersect](S,&colore[airforceblue][ideal super basis](1,conics_0),&colore[airforceblue][ideal super basis](1,conics_1));
&colore[darkorange][i12 :] F = randomCubic I;
\end{Verbatim}
} \noindent 
Thus the residual intersections $T_i = \overline{\Sigma_i\cap V(F) \setminus S}$ 
can be obtained as follows (here we are assuming that 
{\footnotesize \verb!Sigma!} is the list of the ideals of $\Sigma_0,\ldots,\Sigma_4$).
{\footnotesize 
\begin{Verbatim}[commandchars=&\[\]]  
&colore[darkorange][i13 :] T = &colore[airforceblue][apply](5,i -> (Sigma_i + F):S);
\end{Verbatim}
} \noindent 
Finally, 
the following code gives us 
an explicit birational map 
from $\mathbb{P}^4$ to $V(F)$. 
{\footnotesize 
\begin{Verbatim}[commandchars=&\[\]]  
&colore[darkorange][i14 :] ((&colore[bleudefrance][rationalMap] S)|F)^-1;
o14 = RationalMap (birational map from PP^4 to hypersurface in PP^5)
\end{Verbatim}
} \noindent

\providecommand{\bysame}{\leavevmode\hbox to3em{\hrulefill}\thinspace}
\providecommand{\MR}{\relax\ifhmode\unskip\space\fi MR }
\providecommand{\MRhref}[2]{%
  \href{http://www.ams.org/mathscinet-getitem?mr=#1}{#2}
}
\providecommand{\href}[2]{#2}

\end{document}